\newtheorem{definition}{Definition}
\newtheorem{proposition}{Proposition}
\newtheorem{theorem}{Theorem}
\newtheorem{corollary}{Corollary}
\newtheorem{lemma}{Lemma}
\theoremstyle{definition}
\newcommand{\tr}{\operatorname{tr}}
\newcommand{\diag}{\operatorname{diag}}
\def\leq{\leqslant}
\def\geq{\geqslant}
\renewcommand{\Re}{\mathop{\mathrm{Re}}\nolimits}
\begin{document}

\title{Nonunitary representations of the groups of $U(p,q)$-currents for $q\geq p>1$}

\author{A.~M.~Vershik\footnote{St.~Petersburg Department of Steklov Institute of Mathematics; St.~Petersburg State University, St.~Petersburg, Russia; Institute for Information Transmission Problems, Moscow, Russia. Email: {\tt avershik@gmail.com}. Supported by the RSF grant 17-71-20153.} \and $\boxed{\text{M.~I.~Graev}}$}

\date{November 24, 2017.}

\maketitle

\begin{abstract}
The purpose of this paper is to give a construction of representations
of the group of currents for semisimple groups of rank greater than
one. Such groups have no unitary representations in the Fock space,
since the semisimple groups of this form have no nontrivial cohomology
in faithful irreducible representations. Thus we first construct
cohomology of the semisimple groups in nonunitary representations. The
principal method is to reduce all constructions to Iwasawa subgroups
(solvable subgroups of the semisimple groups), with subsequent
extension to the original group. The resulting representation is
realized in the so-called quasi-Poisson Hilbert space associated with
natural measures on infinite-dimensional spaces.

Key words: Iwasawa subgroup, cohomology, group of currents, nonunitary
representations.
\end{abstract}

Mark Iosifovich Graev died in Moscow on April 22, 2017 (born on November 22, 1922). He belonged to a rather narrow circle of the best mathematicians of Russia whose creative activity fell on the second half of the 20th century. The beginning of his mathematical biography from the mid 1940s is entwined with the Moscow school of algebra headed by A.~G.~Kurosh, who requested M.~I.\ to continue the research on the theory of free continuous groups initiated by A.~A.~Markov in the 1930s. This theory was the subject of M.~I.'s PhD thesis and his first papers, which became widely known in the algebraic literature and won the first prize of the Moscow Mathematical Society (1948). From the late 1940s, M.~I.\ participates in I.~M.~Gelfand's seminar and gradually becomes a regular collaborator and the principal coauthor of I.~M.\ in classical representation theory. Their work (along with the previous work by I.~M.~Gelfand and M.~A.~Naimark) contains a huge body of various results in one of the main mathematical theories of the 20th century. I.~M.\ and M.~I.\ wrote together nearly a hundred papers and three monographs, including two volumes of the series {\it Generalized Functions}. Besides, also M.~I.\ wrote two monographs on integral geometry and hypergeometric functions. In the last 10 years, M.~I.\ and I have worked  to continue the series of papers  initiated together with I.~M.~Gelfand as early as in the 1970s. This article is our last joint work. Speaking about the characteristic features of M.~I.\ as a mathematician, I would  mention his unparalleled  thoroughness in daily work and his rare devotion to science, to which he gave his life unreservedly. Great achievements of M.~I.\ in mathematics are in no small part due to remarkable features of his character: kindness, tranquility, modesty, unselfishness. I hope that his name will always serve as a remarkable example of scientist for future generations of mathematicians.

See also an essay about M.~I.\ in {\it Russian Mathematical Surveys} {\bf 63}, No.~1(379), 169--182 (2008), and the list of publications of M.~I.\ therein.


\section{Introduction}

This paper, which we conceived several years ago, but discussed in detail with M.~I.\  only in the most recent time, contains a project of constructing nonunitary representations of groups of currents with coefficients in semisimple groups of  the
 type~$U(p,q)$. It is worth explaining what is the essence and necessity of such a project. Groups of currents, or functional Lie groups, are groups of functions on manifolds (or groups of sections of fiber bundles over manifolds with groups as fibers)
 with values in a finite-dimensional Lie group (or even an arbitrary locally compact group), called the group of coefficients. Since the 1970s, many papers have been written on representations of such groups, among which we mention only the most important ones:
 \cite{Ar,VGG1,VGG2,VGG3,V3,Part,Gui,Del,Ism,AV}.

It was understood that in order to construct unitary representations of a group of currents, one need to use nontrivial cohomology of the group of coefficients with values in irreducible unitary representations of this group. Representations in which the 1-cohomology is nontrivial are called special, and a necessary condition for a representation to be special is that it is ``glued'' to (inseparable from) the identity representation. Of course, the main interest was in semisimple groups; their study from this viewpoint was initiated in~\cite{VGG1}.

Special irreducible representations exist by no means for all Lie groups. For example, in the class of real forms of semisimple Lie groups, this is the case only for the groups of real rank~1 except for the symplectic groups, that is, only for those groups that do not have Kazhdan's property (=~for which the identity representation is isolated in the Fell topology in the space of irreducible unitary representations). In other words, these are the groups $U(p,1), O(p,1)$, $p=1,2, \dots $. For such groups, the corresponding theory was constructed in
 \cite{VGG1,VGG2} and other papers in the 1970s--1980s. After some break, this research was continued in our subsequent papers; here is the complete list of them: \cite{VG1,VG2,VG3,VG4,VG5,VG6,VG7,VG8,VG9,VG10,VG11,
  VG12,VG13}. In particular, a new method, outlined in \cite{VGG3}, was developed to realize these representations,  which instead of the Fock space uses more natural probabilistic constructions: the integral, Poisson, and quasi-Poisson models. The main idea, common for all papers of this series, is that we first develop a representation theory for a solvable subgroup of the semisimple group, namely, for the Iwasawa group, and then extend it to the whole semisimple group. The naturalness of this idea, in any case for real forms of the type $U(p,q),O(p,q),Sp(p,q)$, follows from the structure of the group itself and its Iwasawa subgroup. However, passing to solvable subgroups allows one to use other models of factorizations different from the Fock factorization and more convenient. We mean that the class of L\'evy processes and, in particular, the gamma process provides alternative models of Hilbert spaces and factorizations convenient for representation theory and quantization. One of the important consequences of this approach is  the idea of an analog
  of the Lebesgue measure in an infinite-dimensional space, which is a renormalization of the distribution of the gamma process, has a large group of symmetries, and is related to the Poisson--Dirichlet measures~\cite{V1,V2}.

The study of these groups is not yet complete, however, one should turn to cases that do not fall into the framework of the existing schemes. Namely, what should one do with representations of groups of currents with values in  $O(p,q),U(p,q)$ for $q>1$ and $Sp(p,q)$, as well as in other real forms of semisimple groups?

Since these groups have no special irreducible unitary representations, one should turn to special irreducible  nonunitary representations, and this is what we do in our last papers. In this paper, we complete such a construction for the groups~$U(p,q)$. The plan is as follows: first, we construct a cocycle in a nonunitary representation of the Iwasawa subgroup of~$U(p,q)$. Accordingly, we describe in detail the structure theory of Iwasawa groups, which is not very popular. The Iwasawa subgroup of
 $U(p,q)$ is a solvable group which is the semidirect product of a nilpotent subgroup of lower triangular matrices~$S$ and the so-called Heisenberg group~$N_{p,q}$. While nilpotent groups have no nonidentity special unitary representations, the situation with solvable groups is quite unclear. We do not know whether Iwasawa groups of rank greater than one have special unitary representations. But anyway these special representations cannot be extended to special unitary representations of the whole semisimple group. Hence we define natural nonunitary special representations of the Iwasawa group, i.e., construct nontrivial 1-cohomology in a nonunitary representation of this group, and then extend the representation and the cocycle to the whole group~$U(p,q)$. This can be done only if we give up unitarity, but preserve the boundedness of the representation operators for most elements of the group, except for the main involution, which can be unbounded. After having extended the representation and the cocycle, we proceed to reproduce the construction of the quasi-Poisson model of a representation of the group of its currents. This presents no great difficulties. The construction is sketched in broad strokes and will be considered in detail later. It is important that the constructed representation acts in a Hilbert space (the $L^2$ space with respect to a quasi-Poisson measure) by operators that are either bounded (for most elements of the group) or densely defined (for the involution and some elements of a compact subgroup), and all essential properties of representations of groups of currents that hold for groups of rank~1 are preserved.

It is yet to be explored what vertex-like operators can be defined in the spaces of these representations and what relation can these models have to the corresponding Fock space theory etc. But I believe that this paper lays the groundwork for subsequent considerations.

\section{The group $U(p,q)$ and its solvable subgroup (Iwasawa group)}

\subsection{The group $U(p,q)$}
By definition, $U(p,q)$ is the group of linear transformations of the space $\mathbb {C}^{p+q}$ preserving a Hermitian form of signature~$(p,q)$. Here we chose this form so that the maximal solvable subgroup of $U(p,q)$ takes the simplest form:
$$
\sum\limits_{i=1}^p (x_i \bar x_{q-p+i}+\bar x_i  x_{q-p+i}) +\sum\limits_{i=1}^q |x_{p+i}|^2, \quad q\geq p.
$$

All complex matrices of order $p+q$ will be written in block form:
$$
g=\begin{pmatrix}{}
        g_{11}&g_{12}&g_{13}\\
        g_{21}&g_{22}&g_{23}\\
        g_{31}&g_{32}&g_{33}
\end{pmatrix},
$$
where the diagonal blocks are matrices of orders $p$, $q-p$, and $p$, respectively. We assume that
 $q\geq p$. In particular, if $q=p$, then these matrices degenerate into $2\times2$ block matrices.

In this notation, $U(p,q)$ is defined as the group of block matrices satisfying the condition
\begin{equation}{}\label{1-2}
g \sigma   g^*= \sigma  ,\quad \mbox{where}\quad
\sigma   =\left(\begin{array}{ccc}0&0&e_p\\ 0&e_{q-p}&0\\ e_p&0&0\end{array}\right)
\end{equation}
is the involution and the symbol $*$ stands for the composition of complex conjugation and matrix transpose.

This condition is equivalent to the following relations between the blocks of~$g$:
\begin{equation}\label{1-3}
\begin{gathered}{}
g_{13}g_{31}^*+g_{12}g_{32}^*+g_{11}g_{33}^* =e_p,
\\
g_{23}g_{21}^*+g_{22}g_{22}^*+g_{21}g_{23}^*=  e_q,
\\
g_{13}g_{21}^*+g_{12}g_{22}^*+g_{11}g_{23}^*=0,
\\
g_{13}g_{11}^*+g_{12}g_{12}^*+g_{11}g_{13}^*=0,
\\
g_{23}g_{31}^*+g_{22}g_{32}^*+g_{21}g_{33}^*=0,
\\
g_{33}g_{31}^*+g_{32}g_{32}^*+g_{31}g_{33}^*=0.
\end{gathered}
\end{equation}

The real dimension of the group $U(p,q)$ is  $(p+q)^2$.

\subsection{The subgroups $S$ and $N$}

Denote by $S$ the subgroup of block diagonal matrices from  $U(p,q)$ of the form
$$
\begin{pmatrix}{}
        s^{-*}&0&0\\
        0&e_{q-p}&0\\
        0&0&s
\end{pmatrix},
$$
where  $s$ ranges over the subgroup of complex lower triangular matrices $\|r_{ij}\|$ of order~$p$ such that $r_{ij}=0$ for $i<j$ and the diagonal entries~$r_{ii}$  are real and positive.

Obviously, $S$ is a solvable group of rank~$p$; its real dimension is equal to~$p^2$.

Denote by $N$ the (Heisenberg) subgroup of block matrices of the form
$$
\begin{pmatrix}{}
        e_p&0&0\\
        -z^*&e_{q-p}&0\\
        \zeta&z&e_p
\end{pmatrix},
$$
where $\zeta$ is a complex $p \times p$ matrix, $z$ is a complex $p\times (q-p)$ matrix, and the condition for these matrices to belong to the group $U(p,q)$ has the following form:
$$
\zeta + \zeta^* - zz^*=0, \quad \mbox{i.e.}, \quad \zeta = n-\frac12 zz^*,
$$
where $n$ is a skew-Hermitian matrix. In other words, $\zeta$ has a given real part determined by the matrix~$z$.

Elements of~$N$ will be written as pairs $(\zeta,z)$ with the multiplication law
$$
(\zeta_1,z_1)(\zeta_2,z_2)=(\zeta_1+\zeta_2-zz^*,z_1+z_2).
$$

Obviously, $N$ is a nilpotent subgroup of $U(p,q)$ of real dimension $p(2q-p)$.

Denote by  $Q$ the subgroup in $U(p,q)$ generated by~$S$ and~$N$. It is the semidirect product of these groups:
$$Q=S\rightthreetimes N.$$
Elements of~$S$ act on~$N$ as group automorphisms:
$$s:(\zeta,z) \rightarrow (\zeta,z)^s=(s^*,sz).$$

\subsection{The Iwasawa subgroup $P$}

For every semisimple Lie group~$G$, the following analytic Iwasawa decomposition holds:
$$G=NTK,$$
where $N$ is a maximal nilpotent subgroup, $T$ is $(\mathbb{R}_+^*)^p$ with $p$ the rank of~$G$, and $K$ is a maximal compact subgroup in~$G$. Of course, this is a direct product of spaces, but not a direct product of groups,
however, the first two components form a subgroup in~$G$, namely, the semidirect product of~$T$ and~$N$.

\begin{definition}
The semidirect product $P=T\rightthreetimes N$ of the groups~$N$ and~$T$ is called the Iwasawa subgroup of the semisimple group~$U(p,q)$. The subgroup~$N$ is called the Heisenberg group (see below). In the case
$p=q$, the group~$N$ is the additive (commutative) group of complex matrices of order~$p$, i.e., the ``degenerate'' Heisenberg group.
\end{definition}

In the matrix realization of the group $U(p,q)$ adopted here, we may assume that $N$ is the subgroup of all block matrices in which the entry~$g_{33}$ ranges over the subgroup of lower triangular nilpotent matrices of order~$p$ and $T$ is the intersection of the subgroup~$S$ with the group of block diagonal matrices.

Thus we have the analytic decomposition $U(p,q)=PK$  (of spaces, but not groups),  which plays an important role in our constructions.

It follows from the definition that the group~$P$ in our case is isomorphic to the semidirect product $Q=S \rightthreetimes N$ introduced above, i.e., to the subgroup of all block triangular matrices of the form
$$
\begin{pmatrix}{}
       (s^*)^{-1}&0&0\\
       -(sz)^*&e_{q-p}&0\\
	s\zeta&sz&s
\end{pmatrix}.
$$

Let us give a direct description of the Heisenberg group~$N=N_{p,q}$. Denote by
$A\sim {\mathbb R}^{p^2}$  the additive group of skew-Hermitian matrices~$n$ of order~$p$, by $\mathcal A$, the dual group of unitary characters~$\chi(n)$, and by $Z\sim {\mathbb C}^{p(q-p)}$, the additive group of
complex $p\times(q-p)$ matrices.

In this notation, the Heisenberg group~$N$ can be presented as the group of pairs $(n,z)$, $n\in A$, $z\in Z$, with the multiplication law
$$
(n_1,z_1)(n_2,z_2)=(n_1+n_2-\frac12(z_1z_2^*-z_2 z_1^*),z_1+z_2).
$$
In other words, $N$ is the central extension of the commutative group~$Z$ by the commutative group~$A$ and the symplectic 2-form
$\omega:Z\times Z\rightarrow A$, where $\omega(z_1,z_2)=z_1z_2^*-z_2z_1^*$. The center of~$N$ is the subgroup~$A$, and the quotient by the center is the group~$Z$. Recall that the standard realization of the classical three-dimensional Heisenberg group as the sum
 ${\mathbb C} +\mathbb R$ with the symplectic form
$\omega(u_1,u_2)=i[u_1\cdot {u_2}^* - u_2\cdot {u_1}^*]\in \mathbb R$, $u_1,u_2 \in \mathbb C$, in our notation corresponds to
$n\in i\mathbb R$, $z=u\in \mathbb C$, i.e., the classical real three-dimensional Heisenberg group corresponds to the case
$p=1$, $q=2$. For $p=1$ and arbitrary~$q$, we obtain the ordinary $(2q+1)$-dimensional Heisenberg group; such groups will be called the one-row Heisenberg groups.

The real dimension of the group $N_{p,q}$ (for $p\ne q$) is  $2p(q-p)+p^2=p(2q-p)$, and the dimension of the Iwasawa group (subgroup in  $U(p,q)$) is~$2pq$. The dimension of the maximal compact subgroup
 ($U(p)\times U(q)$) is~$p^2+q^2$.

We emphasize that the study of the Iwasawa group as a solvable group, its representations and cohomology deserves attention in its own right, regardless of applications to representations of groups of currents considered below and to other problems.

\section{Nondegenerate irreducible unitary representations of the Heisenberg group~$N$ and Iwasawa group~$P$}

In what follows, we will speak of the Iwasawa group rather than the Iwasawa subgroup, since it is of interest for us in itself. To begin with, the groups~$N$ and~$Z$ described above are acted on by the group~$S$ of automorphisms:
$$
n\rightarrow sns^*, \quad \chi(n) \rightarrow \chi(sns^*), \quad z \rightarrow sz.
$$

Thus, the set of elements of the group~$A$ and the set of elements of the group~$\mathcal A$ of its unitary characters split into orbits of the action of~$S$.

\begin{definition}
Let us say that an element of either of these groups, as well as its $S$-orbit, is nondegenerate if the maps $n\rightarrow sns^*$ and $\chi(n)\rightarrow \chi(sns^*)$ are faithful.

Elements of the same nondegenerate $S$-orbit will be called conjugate.
\end{definition}

In particular, characters of the from
$$
\chi^\epsilon (n) = \exp(\tr(\epsilon n)), \quad\text{where}\quad \epsilon=\diag(\epsilon_1,\dots,\epsilon_p), \quad \epsilon_i=\pm 1,
$$
are nondegenerate. The orbit of every character~$\chi^\epsilon$ consists of the characters of the form
$$
\chi_s^\epsilon (n)=\chi^\epsilon(sns^*).
$$

\begin{theorem}
The set of all nondegenerate characters is exhausted by the characters of the form $\chi_s^\epsilon$. Thus there are exactly
$2^p$ nondegenerate $S$-orbits of characters.
\end{theorem}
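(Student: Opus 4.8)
The plan is to translate everything into the language of Hermitian matrices. Every unitary character of the additive group $A$ of skew-Hermitian $p\times p$ matrices has the form $\chi_B(n)=\exp(\tr(Bn))$ for a unique Hermitian matrix $B$: if $B$ is Hermitian and $n$ is skew-Hermitian, then $\overline{\tr(Bn)}=\tr(n^*B^*)=-\tr(Bn)$, so $\tr(Bn)$ is purely imaginary and $\chi_B$ is a unitary character, while the real bilinear pairing $(B,n)\mapsto -i\,\tr(Bn)$ is nondegenerate, so $B\mapsto\chi_B$ is an isomorphism of the additive group $\mathcal H_p$ of Hermitian matrices onto $\mathcal A$; note $\chi^\epsilon=\chi_\epsilon$. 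Since $\tr(Bsns^*)=\tr((s^*Bs)n)$, the action $s\colon\chi(n)\mapsto\chi(sns^*)$ carries $\chi_B$ to $\chi_{s^*Bs}$, so under this identification the $S$-action on $\mathcal A$ becomes $B\mapsto s^*Bs$ on $\mathcal H_p$, and $\chi^\epsilon_s$ corresponds to $s^*\epsilon s$. Thus the theorem amounts to the following: among Hermitian $B$, those with trivial $S$-stabilizer are exactly the ones lying in one of the $2^p$ orbits $\{s^*\epsilon s:s\in S\}$, and these orbits are pairwise distinct.

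The key point is that a lower-triangular $s$ preserves each subspace $U_k=\operatorname{span}(e_{p-k+1},\dots,e_p)$, so the restriction to $U_k$ of the form $x\mapsto x^*(s^*Bs)x$ is the pullback along $s|_{U_k}$ of the restriction to $U_k$ of $x\mapsto x^*Bx$. Writing $\nabla_k(B)$ for the trailing $k\times k$ principal minor of $B$ (the determinant of the submatrix on rows and columns $p-k+1,\dots,p$), this gives $\nabla_k(s^*Bs)=\bigl(\prod_{i=p-k+1}^{p}s_{ii}\bigr)^2\nabla_k(B)$; since the $s_{ii}$ are positive, $\operatorname{sign}\nabla_k(B)$ is an $S$-invariant, and the vanishing of $\nabla_k$ is an $S$-invariant condition. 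For $\epsilon=\diag(\epsilon_1,\dots,\epsilon_p)$ one has $\nabla_k(\epsilon)=\epsilon_{p-k+1}\cdots\epsilon_p$, so the vector $(\operatorname{sign}\nabla_1,\dots,\operatorname{sign}\nabla_p)$ determines $\epsilon$; hence distinct $\epsilon$ give distinct $S$-orbits.

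Next I would establish the triangular diagonalization: if all $\nabla_k(B)$ are nonzero, then $B=s^*\epsilon s$ for a unique $s\in S$ and a unique diagonal sign matrix $\epsilon$. This is a Gram–Schmidt procedure along the flag $U_1\subset U_2\subset\dots\subset U_p$: one constructs columns $h_p,h_{p-1},\dots,h_1$ with $h_{p-k+1}\in U_k$, pseudo-orthonormal for the form of $B$, which are then the columns of $s^{-1}$. At the $k$-th step the already-constructed $h_p,\dots,h_{p-k+2}$ span $U_{k-1}$; their $B$-orthogonal complement inside $U_k$ is one-dimensional because $B|_{U_k}$ is nondegenerate ($\nabla_k(B)\ne0$), and a spanning vector of it has nonzero $e_{p-k+1}$-component, since otherwise it would lie in $U_{k-1}$ and hence in the (trivial) radical of $B|_{U_{k-1}}$; one then rescales this vector by a positive real to make its $B$-square equal to $\pm1$ and by a unit complex number to make its $e_{p-k+1}$-component real positive, and all these choices are forced. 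The resulting $s^{-1}$, hence $s$, lies in $S$. Applying this to $B=\epsilon$ (whose trailing minors are $\pm1$) and comparing with the trivial decomposition shows $\operatorname{Stab}_S(\epsilon)=\{e\}$; therefore $\chi^\epsilon$ is nondegenerate, and so is every character in its $S$-orbit, since stabilizers along an orbit are conjugate.

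Finally, let $\chi_B$ be an arbitrary nondegenerate character. If some $\nabla_k(B)=0$, then by $S$-invariance the whole orbit $S\cdot B$ lies in the zero set of the nonzero (hence proper) polynomial $\nabla_k$ on $\mathcal H_p\cong\mathbb R^{p^2}$, so $\dim(S\cdot B)\le p^2-1$ and, by the orbit–stabilizer relation, $\operatorname{Stab}_S(B)$ has positive dimension, contradicting nondegeneracy. Hence all $\nabla_k(B)\ne0$, and by the previous step $B=s^*\epsilon s$, i.e. $\chi_B=\chi^\epsilon_s$ lies in one of the $2^p$ orbits; together with their distinctness this is exactly the assertion. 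The delicate part is the Gram–Schmidt step: the flag must be taken "from the bottom", $U_1\subset\dots\subset U_p$, precisely so that the change-of-basis matrix comes out lower triangular with positive diagonal, and the nonvanishing of all trailing minors is exactly what keeps each $B$-orthogonal complement one-dimensional with nonzero leading entry throughout the induction.
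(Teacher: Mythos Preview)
The paper states this theorem without proof, so there is no ``paper's own proof'' to compare against; your argument fills a gap that the authors left to the reader.

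Your proof is correct. The identification $\mathcal A\cong\mathcal H_p$ via $B\mapsto\chi_B$, the translation of the $S$-action to $B\mapsto s^*Bs$, and the use of the trailing principal minors $\nabla_k$ as $S$-invariants are all sound. The Gram--Schmidt/LDL$^*$ step is the standard fact that a Hermitian matrix with all trailing principal minors nonzero has a unique factorization $B=s^*\epsilon s$ with $s$ lower triangular with positive real diagonal and $\epsilon$ a diagonal sign matrix; your outline of it is accurate, and the uniqueness immediately gives $\operatorname{Stab}_S(\epsilon)=\{e\}$. The final dimension argument is also correct: $\dim_{\mathbb R}S=\dim_{\mathbb R}\mathcal H_p=p^2$, so a trivial stabilizer would make the orbit an immersed $p^2$-dimensional submanifold of $\mathbb R^{p^2}$, hence open, and an open set cannot lie in the zero locus of the nonconstant polynomial $\nabla_k$.

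One small remark: in the last step you could also argue directly, avoiding the dimension count. If $\nabla_k(B)=0$, the Hermitian form $B|_{U_k}$ has a nonzero radical vector $v\in U_k$; writing $v$ so that its topmost nonzero coordinate is at position $j\ge p-k+1$, the one-parameter subgroup $s(t)=e+t\,v e_j^{*}\,$ (suitably normalized so that $s(t)\in S$) satisfies $s(t)^*Bs(t)=B$, giving an explicit nontrivial stabilizer. Either route is fine.
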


In this section, with each nondegenerate character we will associate an irreducible unitary representation of the Heisenberg group.

%


Every character $\chi$ gives rise to a unitary representation of the group~$N$ in the Hilbert space $H_\chi=L^2(Z,dz)$, where $dz$ is the Lebesgue measure on~$Z$.

The operators $T$ of this representation are given by the following formula:
$$T(n^0,z^0)f(z)=\chi^\epsilon(n-\frac12(zz^{0*}-z^0z^*))f(z+z^0).$$
Note that the space~$H$ of the regular representation of the group~$N$ is the direct integral $H=\int\limits^\otimes H_\chi d\chi$, where $d \chi$ is the invariant measure on the group of characters $\chi$.

Let us study the structure of the spaces~$H_\chi$ associated with nondegenerate characters~$\chi$ and their decomposition into irreducible subspaces.

\subsection{The representation~$T$ of the group~$N$ induced by a nondegenerate character}

Usually, one realizes the representation of the Heisenberg group in the Hilbert space~$H_\chi$ as a representation induced by the characters~$\chi^\epsilon$. According to the general definition of the induced representation $H_\chi=L^2(Z,dz)$, the representation operators corresponding to elements of the group~$N$ in the space~$H_\chi$ are given by the formula
\begin{equation}
T^\epsilon(n^0,z^0)f(z)=\chi^\epsilon  (n-\frac12 (z z^{0*}-z^0z^*)) f(z+z^0),
\end{equation}
or, in more detail,
\begin{equation}
T^\epsilon(n^0,z^0)f(z)=\exp\bigg(\sum\limits_{i=1}^p (\epsilon_i n_{ii}-\frac12(z_iz_i^{0*}-z_i^0 z_i^*))\bigg) f(zz^0),
\end{equation}
where $z_i$ is the $i$th row of the matrix~$z$.

First, consider the case of a nondenenerate character on~$N$ of the form
 $\chi^\epsilon\!(n)\! = \!\exp(\tr \epsilon n)$,
where $\epsilon\!=\!\diag (\epsilon_1,\ldots,\epsilon_p)$, $\epsilon_i\!=\!\pm 1$.

Note that each of the nondegenerate $S$-orbits in the space of characters has a character~$\chi^\epsilon$ as a representative.

However, we will need another realization of the representation~$T$, which is obtained by replacing functions~$f(z)$ with $f(z)e^{-\frac12\tr zz^*}$.

 \begin{theorem}
In the new realization, the representation~$T$ of the group~$N$ acts in the Hilbert space $L^2(Z,d\mu(z))$, where $d\mu(z)=e^{-\tr zz^*}dz$ is the Gaussian measure on $Z$, as follows:
 {\small
\begin{equation}
T^\epsilon (n^0,z^0) f(z) \!= \!\exp \bigg( \sum\limits_{i=1}^p (\epsilon_i n_{ii}-\frac12 z^0z^{0*}\! - \!\sum\limits_{i;\epsilon_i=1} z_iz_i^{0*}\!-\!\sum\limits_{i;\epsilon_i=1}z_i^0z_i^*)\bigg)f(z+z^0),
\end{equation}}
where $z_i$ is the $i$th row of the matrix~$z$.
\end{theorem}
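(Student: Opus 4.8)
The plan is to obtain the displayed formula by transporting the induced representation $T^\epsilon$ of $N$ on $L^2(Z,dz)$ described above along an explicit unitary isomorphism of Hilbert spaces and then computing; there is no conceptual difficulty here, only bookkeeping. The prescription ``replace $f(z)$ by $f(z)e^{-\frac12\tr zz^*}$'' should be read as the passage to an equivalent representation: the map $(Vf)(z)=e^{-\frac12\tr zz^*}f(z)$ is a unitary isomorphism $V\colon L^2(Z,d\mu)\to L^2(Z,dz)$, since $e^{-\frac12\tr zz^*}$ is a strictly positive real function whose square $e^{-\tr zz^*}$ is exactly the density of $d\mu$ relative to $dz$, so that $\|Vf\|_{L^2(dz)}^2=\int_Z|f(z)|^2e^{-\tr zz^*}\,dz=\|f\|_{L^2(d\mu)}^2$ and $V^{-1}g=e^{\frac12\tr zz^*}g$. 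Hence the new realization is $\widetilde T^\epsilon(g)=V^{-1}T^\epsilon(g)V$, a representation of $N$ on $L^2(Z,d\mu)$ that is automatically unitary and equivalent to $T^\epsilon$; only its explicit form on an element $(n^0,z^0)$ has to be found.

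To compute it, write $T^\epsilon(n^0,z^0)=M_c\circ L_{z^0}$, where $(L_{z^0}f)(z)=f(z+z^0)$ and $M_c$ is multiplication by the cocycle $c(z)=\chi^\epsilon\bigl(n^0-\tfrac12(zz^{0*}-z^0z^*)\bigr)$; here one uses that $zz^{0*}-z^0z^*$ is skew-Hermitian, so the argument of $\chi^\epsilon$ is genuinely skew-Hermitian and $\chi^\epsilon$ is legitimately applied and of modulus one. Since $V$ and $V^{-1}$ are themselves multiplication operators, $V^{-1}M_cV=M_c$, while $V^{-1}L_{z^0}V$ is the translation $L_{z^0}$ multiplied by $e^{\frac12\tr zz^*-\frac12\tr(z+z^0)(z+z^0)^*}$. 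Therefore $\widetilde T^\epsilon(n^0,z^0)$ is again ``a multiplier times $L_{z^0}$'', with multiplier $c(z)\,e^{\frac12\tr zz^*-\frac12\tr(z+z^0)(z+z^0)^*}$; expanding $\tr(z+z^0)(z+z^0)^*=\tr zz^*+\tr zz^{0*}+\tr z^0z^*+\tr z^0z^{0*}$ collapses the Gaussian factor to $\exp\bigl(-\tfrac12\tr zz^{0*}-\tfrac12\tr z^0z^*-\tfrac12\tr z^0z^{0*}\bigr)$.

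It then only remains to expand $c(z)$ and collect terms. Using $\chi^\epsilon(m)=\exp\tr(\epsilon m)=\exp\sum_i\epsilon_i m_{ii}$ together with $(zz^{0*})_{ii}=z_iz_i^{0*}$ and $(z^0z^*)_{ii}=z_i^0z_i^*$ gives $c(z)=\exp\bigl(\sum_i\epsilon_i n^0_{ii}-\tfrac12\sum_i\epsilon_i z_iz_i^{0*}+\tfrac12\sum_i\epsilon_i z_i^0z_i^*\bigr)$; multiplying by the Gaussian factor and collecting row by row, the coefficient of $z_iz_i^{0*}$ becomes $-\tfrac12(1+\epsilon_i)$ and that of $z_i^0z_i^*$ becomes $-\tfrac12(1-\epsilon_i)$. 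Since $\epsilon_i=\pm1$, each of these is $0$ or $-1$, so the surviving cross terms are $-\sum_{i\,:\,\epsilon_i=1}z_iz_i^{0*}$ and $-\sum_{i\,:\,\epsilon_i=-1}z_i^0z_i^*$, which together with the diagonal term $\sum_i\epsilon_i n^0_{ii}$ and the $z$-independent constant $-\tfrac12\tr z^0z^{0*}$ give the asserted exponent (the two partial sums are the holomorphic, respectively antiholomorphic, contributions of the rows selected by the sign of $\epsilon_i$). As a check one may use $\overline{z_iz_i^{0*}}=z_i^0z_i^*$ to see that the real part of the resulting exponent equals $-\Re\tr zz^{0*}-\tfrac12\tr z^0z^{0*}$, which is exactly the logarithm of the modulus of the multiplier forced by the known unitarity of $\widetilde T^\epsilon(n^0,z^0)$.

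I do not expect a genuine obstacle: everything reduces to careful bookkeeping in the block/row notation — tracking the signs produced by the conjugate transpose, the ranges of the summations, and invoking skew-Hermiticity of $n$ where needed. It is, however, worth recording afterwards why this particular realization is the one wanted: $d\mu$ is the Bargmann--Gaussian measure, and relative to it the $S$-equivariant decomposition of $H_\chi$ into irreducible subspaces turns into a decomposition into spaces of functions that are holomorphic in the rows with $\epsilon_i=+1$ and antiholomorphic in the rows with $\epsilon_i=-1$ — precisely the structure used in the subsections that follow.
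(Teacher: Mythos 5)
Your proposal is correct and follows essentially the same route as the paper: the paper's own (one-line) proof is precisely the observation that conjugating by multiplication by $e^{-\frac12\tr zz^*}$ multiplies the old kernel $\chi^\epsilon\bigl(n^0-\frac12(zz^{0*}-z^0z^*)\bigr)$ by the Gaussian factor $e^{\frac12\tr zz^*-\frac12\tr\left((z+z^0)(z+z^0)^*\right)}$, which you then expand and collect row by row exactly as the paper intends. Note only that your (correct) exponent has the second cross-term sum over $\{i:\epsilon_i=-1\}$, in agreement with the Corollary that follows in the paper, so the index $\epsilon_i=1$ on the last sum in the theorem's displayed formula should be read as a typo for $\epsilon_i=-1$.
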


Indeed, in the new realization, the formula for the action of~$T$ is obtained from the formula in the old realization by adding the following factor:
\begin{equation}
e^{-\tr[(z+z^0)]z^*z^{0*}+\tr zz^{0*}}=\chi\bigg(\frac 12 \sum\limits_{i=1}^p(z_i^0z_i^{0*}+z_iz_i^{0*}+z_i^0z_i^*)\bigg).
\end{equation}

\begin{corollary} The Hilbert space $L^2(Z,\mu)$ can be written as the tensor product
\begin{equation}
L^2(Z,\mu)=\otimes_{i=1}^p L^2(Z_i,\mu_i)
\end{equation}
of the Hilbert spaces $L^2(Z_i,\mu_i)$, where $Z_i$ is the $i$th row of the matrix~$Z$ and $\mu_i$ is the Gaussian measure on~$Z_i$,  acted on by the representations~$T_i$ of the one-row Heisenberg groups with elements $(n_i,z_i)$:
\begin{equation}
T_i^\epsilon(n_i^0,z_i^0)f(z_i)=\exp(n_{ii}-\frac12 z_i^0z_i^{0*}-z_iz_i^{0*})f(z_i+z_i^0)\quad\mbox{ for } \epsilon_i=1,
\end{equation}
\begin{equation}
T_i^\epsilon(n_i^0,z_i^0)f(z_i)=\exp(-n_{ii}-\frac12 z_i^0z_i^{0*}-z_i^0z_i^*)f(z_i+z_i^0)\quad\mbox{ for }\epsilon_i=-1.
\end{equation}
\end{corollary}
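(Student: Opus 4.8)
The plan is to exploit the explicit formula from the preceding Theorem for the operators $T^\epsilon(n^0,z^0)$ acting on $L^2(Z,\mu)$, and to observe that both the measure space and the operator action decouple along the rows of the matrix $z$. First I would note that the matrix $z \in Z \cong \mathbb{C}^{p(q-p)}$ decomposes as $z = (z_1,\dots,z_p)$ where $z_i \in Z_i \cong \mathbb{C}^{q-p}$ is the $i$th row, so that $Z = Z_1 \times \cdots \times Z_p$ as a direct product of vector spaces. Since $\tr(zz^*) = \sum_{i=1}^p z_i z_i^*$, the Gaussian density factors: $e^{-\tr zz^*}\,dz = \prod_{i=1}^p e^{-z_i z_i^*}\,dz_i$, hence $\mu = \bigotimes_{i=1}^p \mu_i$ and therefore $L^2(Z,\mu) = \bigotimes_{i=1}^p L^2(Z_i,\mu_i)$ by the standard identification of $L^2$ of a product measure with the Hilbert tensor product.

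Next I would unwind the action of $T^\epsilon(n^0,z^0)$ from the Theorem's formula. The exponent there is a sum $\sum_{i=1}^p(\epsilon_i n_{ii} - \tfrac12 z_i^0 z_i^{0*} - \cdots)$ in which every term depends only on the index $i$ — namely on $n_{ii}$, on $z_i$, and on $z_i^0$ — and the translation $f(z) \mapsto f(z+z^0)$ acts row-by-row as $z_i \mapsto z_i + z_i^0$. (Here I would read off that for $\epsilon_i = 1$ the $i$th summand contributes $n_{ii} - \tfrac12 z_i^0 z_i^{0*} - z_i z_i^{0*}$, while for $\epsilon_i = -1$, using skew-Hermiticity of $n$ so that $\epsilon_i n_{ii} = -n_{ii}$ and regrouping the cross terms, it contributes $-n_{ii} - \tfrac12 z_i^0 z_i^{0*} - z_i^0 z_i^*$, matching the two displayed formulas for $T_i^\epsilon$.) Consequently $T^\epsilon(n^0,z^0)$ is the tensor product $\bigotimes_{i=1}^p T_i^\epsilon(n_i^0,z_i^0)$ of operators, each $T_i^\epsilon$ acting on the single factor $L^2(Z_i,\mu_i)$ and depending only on the pair $(n_i^0, z_i^0)$ — that is, on the image of $(n^0,z^0)$ in the $i$th one-row Heisenberg group.

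It then remains to check that each $T_i^\epsilon$ is genuinely a representation of the one-row Heisenberg group, which follows either by a direct verification of the cocycle/multiplicativity identity from the multiplication law $(n_1,z_1)(n_2,z_2) = (n_1+n_2-\tfrac12(z_1z_2^* - z_2 z_1^*), z_1+z_2)$ restricted to the $i$th row, or — more cleanly — by observing that $T_i^\epsilon$ is exactly the $p=1$ instance of the construction of the Theorem, applied to the one-row Heisenberg group with the character $n_i \mapsto \exp(\epsilon_i n_{ii})$, and hence is a representation by that Theorem. Since the map $N \to N_1 \times \cdots \times N_p$ sending $(n^0,z^0)$ to its tuple of rows (with $n^0$ replaced by its diagonal, the off-diagonal skew-Hermitian part being killed by every $\chi^\epsilon$) is a group homomorphism onto the product of one-row Heisenberg groups, the factorization $T^\epsilon = \bigotimes_i T_i^\epsilon$ is compatible with the group structure, completing the proof.

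The only real subtlety — and the step I would be most careful about — is the bookkeeping of the cross terms $z_i z_i^{0*}$ versus $z_i^0 z_i^*$ when $\epsilon_i = -1$: one must use that $n$ is skew-Hermitian (so $\bar n_{ii} = -n_{ii}$, making $n_{ii}$ purely imaginary and $\epsilon_i n_{ii} = -n_{ii}$) and that the character is a unitary character, so that the terms can be rearranged into the asymmetric form displayed for the $\epsilon_i=-1$ case without changing the operator. Everything else is the routine observation that a product measure gives a tensor-product $L^2$ and that a coordinate-wise action gives a tensor-product operator.
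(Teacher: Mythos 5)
Your proposal is correct and follows the same route the paper intends: the Corollary is stated without a separate proof precisely because the tensor factorization is read off directly from the row-indexed sum in the exponent of Theorem~2's formula together with the product structure of the Gaussian measure, which is exactly what you verify. Your careful bookkeeping of the $\epsilon_i=\pm1$ cross terms (and the observation that the inner sums in the theorem's displayed formula should be indexed by $\epsilon_i=1$ and $\epsilon_i=-1$ respectively) fills in details the paper leaves implicit, but introduces no new idea.
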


\subsection{The irreducible decomposition of the representation of the group~$N$
in the space $H_\chi^\epsilon$}

\begin{definition}
The subspace of functions $f(z)$ from $L^2(Z,\mu)$ that are holomorphic with respect to the rows~$z_i$ where $\epsilon_i=1$, and anti-holomorphic with respect to the rows~$z_i$ where $\epsilon_i=-1$ will be called
the Bargmann space of type~$\epsilon$ and denoted by $K^\epsilon$.
\end{definition}

It follows from Theorem~{\rm2} and its corollary that this subspace is closed and invariant. The representation operators corresponding to elements of the group~$N$ are defined on this space by the formulas given above.

It is also obvious that the Bargmann space~$K^\epsilon$ is the tensor product
$$K^{\epsilon}=\bigotimes_{i=1}^p K_i,$$
where $K_i$ is the space of holomorphic functions of~$z_i$ for $\epsilon_i=1$, and the space of anti-holomorphic functions of~$z_i$ for $\epsilon_i=-1$, with representations of the one-row Heisenberg group defined on them.

Observe two properties of the Bargmann space~$K^\epsilon$.

(i) The following finite monomials form an orthonormal basis in the Bargmann space:
$$f(z)=\prod_{ij}\frac{x^k_{ij}}{\sqrt{k_{ij}!}},$$
where $x_{ij}=z_{ij}$ for $\epsilon_i=1$ and $x_{ij}=\bar z_{ij}$ for $\epsilon_i=-1$.

(ii) $\int\limits_z f(z)\,d\mu(z)=f(0)$ for every function $f\in K^\epsilon$.

\begin{theorem} The representation $T^\epsilon$ of the group~$N$ in the Bargmann space~$K^\epsilon$ is irreducible.
\end{theorem}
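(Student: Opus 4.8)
The plan is to reduce the problem to the classical one-row case via the tensor product decomposition $K^\epsilon=\bigotimes_{i=1}^p K_i$ noted just above the statement, and then to prove irreducibility of each factor $K_i$ under the corresponding one-row Heisenberg group. Since the representation $T^\epsilon$ of $N$ restricts on each factor to the one-row representation $T_i^\epsilon$ (and since the center $A$ acts on $K^\epsilon$ so as to separate the tensor factors — the $i$th central coordinate $n_{ii}$ acts only on $K_i$), an operator commuting with all $T^\epsilon(n^0,z^0)$ must in particular commute with each $T_i^\epsilon$ acting on its factor while fixing the others; a standard argument then shows such an operator is a tensor product of scalars, hence scalar, once each $K_i$ is shown to be irreducible. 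So the heart of the matter is: the one-row Heisenberg group acting on the Bargmann space of holomorphic (resp. anti-holomorphic) functions of a single row $z_i\in\mathbb C^{q-p+1}$ — wait, more precisely $z_i\in\mathbb C^{q-p}$ together with the central variable $n_{ii}$ — is irreducible. This is the Stone–von Neumann theorem in the Fock/Bargmann realization, but I would prove it directly.

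First I would treat the case $\epsilon_i=1$ (the case $\epsilon_i=-1$ is identical after complex conjugation). Work in $K_i=$ holomorphic functions of $z_i=(z_{i1},\dots,z_{i,q-p})$ with the Gaussian inner product, with orthonormal monomial basis $\prod_j z_{ij}^{k_{ij}}/\sqrt{k_{ij}!}$ from property (i). Differentiating the action formula $T_i^\epsilon(0,z_i^0)f(z_i)=\exp(-\tfrac12 z_i^0 z_i^{0*}-z_i z_i^{0*})f(z_i+z_i^0)$ at $z_i^0=0$ in the direction $e_j$ and $\bar e_j$ produces (up to normalization) the creation operator "multiplication by $z_{ij}$" and the annihilation operator $\partial/\partial z_{ij}$; the central variable gives a scalar. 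Now suppose $B$ commutes with all operators $T_i^\epsilon$. Then $B$ commutes with all these creation and annihilation operators (on the dense domain of polynomials). Apply $B$ to the vacuum vector $\mathbf 1$: since annihilation operators kill $\mathbf 1$ and commute with $B$, the vector $B\mathbf 1$ is killed by every $\partial/\partial z_{ij}$, hence is constant, say $B\mathbf 1=c\,\mathbf 1$. Then for any monomial $v=\prod z_{ij}^{k_{ij}}/\sqrt{k_{ij}!}$, write $v$ as a product of creation operators applied to $\mathbf 1$, commute them past $B$, and conclude $Bv=c\,v$. Hence $B=c\cdot\mathrm{Id}$, proving irreducibility of $K_i$.

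The main obstacle is the rigorous handling of unbounded operators and domains: the creation and annihilation operators are unbounded, the representation operators $T_i^\epsilon(n^0,z^0)$ are bounded but one must justify differentiating the operator-valued function, justify that $B$ (bounded, by Schur it suffices to consider bounded $B$ in the commutant) maps the polynomial domain into itself and genuinely commutes with the infinitesimal operators there, and justify that the polynomials are a core. The clean way is to avoid differentiation altogether: let $B$ be bounded and commute with all $T^\epsilon(n^0,z^0)$. Using property (ii), $\int_Z f\,d\mu=f(0)$, the vacuum $\mathbf 1$ is (up to scalar) the unique vector fixed by the subgroup $\{(0,z^0)\}$ modulo phases — more usefully, the matrix coefficient $\langle T^\epsilon(n^0,z^0)\mathbf 1,\mathbf 1\rangle=\exp(\sum_i\epsilon_i n_{ii}^0-\tfrac12 z^0 z^{0*})$ is, after the Gaussian twist, essentially the Fock vacuum coefficient, and cyclicity of $\mathbf 1$ (the translates $T^\epsilon(0,z^0)\mathbf 1$ span $K^\epsilon$, which follows from property (i) by expanding $e^{-z z^{0*}}$ in powers of $z^0$) together with the explicit form of $\langle T^\epsilon(\cdot)\mathbf 1,\mathbf 1\rangle$ pins down $B\mathbf 1$ as a scalar multiple of $\mathbf 1$ and hence $B$ as that scalar on the cyclic span, i.e.\ on all of $K^\epsilon$. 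I would present the argument in this matrix-coefficient form for the full group $N$ directly, using the tensor factorization only to identify the relevant matrix coefficient, thereby sidestepping the domain issues entirely.
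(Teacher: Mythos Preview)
Your primary approach --- differentiate the representation to obtain creation and annihilation operators, observe that the annihilators kill the vacuum $\mathbf 1$ and the creators generate all monomials from it, and conclude that any intertwiner $B$ must fix $\mathbf 1$ up to scalar and hence be scalar --- is exactly what the paper does. The paper's proof is in fact terser than yours: it works directly with the full group $N$ (indices $i,j$ all at once) without first reducing to the one-row tensor factors, and it does not pause over the domain issues you flag; it simply asserts that the degree-raising/lowering property of $A^{\pm}_{ij}$ together with $A^{-}_{ij}\mathbf 1=0$ ``imply that the representation is irreducible.'' So your tensor-factor reduction is a harmless detour, and your worries about unbounded operators, while legitimate, go beyond the level of rigor the paper itself maintains (polynomials are analytic vectors for the Gaussian, so Stone's theorem disposes of the issue if one cares to say so).

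Your alternative ``clean'' route via matrix coefficients is not in the paper, and as sketched it has a soft spot. Cyclicity of $\mathbf 1$ is fine (expanding $e^{-zz^{0*}}$ in $\bar z^{0}$ gives all holomorphic monomials), but the claim that the explicit form of $\langle T^\epsilon(\cdot)\mathbf 1,\mathbf 1\rangle$ by itself ``pins down $B\mathbf 1$ as a scalar multiple of $\mathbf 1$'' is not justified: knowing the vacuum matrix coefficient and that $B$ commutes with $T^\epsilon$ tells you how $B\mathbf 1$ pairs against the cyclic orbit of $\mathbf 1$, but you still need a reason why the only vector with those pairings is a multiple of $\mathbf 1$. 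To close this you would have to invoke either (a) the uniqueness of $\mathbf 1$ as the sole vector in $K^\epsilon$ annihilated by all $\partial/\partial z_{ij}$ --- which puts you back in the infinitesimal argument --- or (b) the reproducing-kernel identity $\langle f,\mathbf 1\rangle=f(0)$ together with a computation showing $(B\mathbf 1)(w)$ is constant in $w$, or (c) purity of the Gaussian state on the Weyl CCR algebra. Any of these works, but none is free; the paper's three-line infinitesimal argument is the path of least resistance here.
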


\begin{proof} It suffices to consider only the case $\epsilon\!=\!(1,{\ldots},1)$. We introduce the infinitesimal creation and annihilation operators associated with the representation~$T$, i.e., the operators of the form
$$
A_{ij}^+f=\frac{\partial (Tf)}{\partial z_{ij}^{0}} \bigg|_{z^0=0}, \quad A_{ij}^-f=\frac{\partial (Tf)}{\partial \bar z_{ij}} \bigg|_{z^0=0}.
$$
It follows from the definition that the operators~$A^+$ increase the degree of every monomial by one, while the operators~$A^-$ decrease this degree by one and, therefore, annihilate the vacuum vector. These properties imply that the representation is irreducible.
\end{proof}

\begin{theorem} The representation $T^\epsilon$ of the group~$N$ in the Hilbert space $L^2(Z,\mu)$ is a countable multiple of its restriction to the Bargmann subspace~$K^\epsilon$, i.e., can be decomposed into a countable direct sum of irreducible representations equivalent to the representation of~$N$ in~$K^\epsilon$.
\end{theorem}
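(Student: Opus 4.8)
The plan is to produce on $L^2(Z,\mu)$ a finite family of commuting ``anti-holomorphic'' ladder operators that commute with the whole representation $T^\epsilon$, and then to invoke the standard Fock--Segal--Bargmann decomposition attached to such a family. As in the proof of Theorem~3, it suffices to treat the case $\epsilon=(1,\dots,1)$ (the rows with $\epsilon_i=-1$ are handled by the mirror argument, interchanging $z_{ij}$ with $\bar z_{ij}$); so from now on $K^\epsilon$ is the subspace of $f\in L^2(Z,\mu)$ holomorphic in all the entries $z_{ij}$, i.e.\ $K^\epsilon=\bigcap_{i,j}\ker\bigl(\partial/\partial\bar z_{ij}\bigr)$.

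First I would introduce, for $1\le i\le p$ and $1\le j\le q-p$, the densely defined operators on $L^2(Z,\mu)$
$$
D_{ij}=\frac{\partial}{\partial\bar z_{ij}},\qquad D^{*}_{ij}=\bar z_{ij}-\frac{\partial}{\partial z_{ij}} .
$$
An integration by parts against the Gaussian weight $e^{-\tr zz^{*}}$ shows that $D^{*}_{ij}$ is the formal adjoint of $D_{ij}$, and a direct check gives the canonical commutation relations $[D_{ij},D^{*}_{kl}]=\delta_{ik}\delta_{jl}$, with all remaining commutators among the $D_{ij},D^{*}_{ij}$ equal to zero; moreover $\bigcap_{i,j}\ker D_{ij}=K^\epsilon$.

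The crucial step is to check that every $D_{ij}$ and every $D^{*}_{ij}$ commutes with $T^\epsilon(n^0,z^0)$ for all $(n^0,z^0)\in N$. Using the formula of Theorem~2 one writes $T^\epsilon(n^0,z^0)=M_\phi\circ\tau_{z^0}$, where $\tau_{z^0}f(z)=f(z+z^0)$ and $M_\phi$ is multiplication by $\phi(z)=\mathrm{const}\cdot\exp\bigl(-\sum_{i,j}z_{ij}\bar z^0_{ij}\bigr)$, a function holomorphic in $z$. For $D_{ij}$ the commutation is immediate, since $\tau_{z^0}$ commutes with $\partial/\partial\bar z_{ij}$ and $\partial\phi/\partial\bar z_{ij}=0$. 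For $D^{*}_{ij}$ it rests on a cancellation: $M_{\bar z_{ij}}$ fails to commute with $\tau_{z^0}$, producing an extra summand $-\bar z^0_{ij}\,\tau_{z^0}$, while $\partial/\partial z_{ij}$ fails to commute with $M_\phi$, producing an extra summand $-\bar z^0_{ij}M_\phi$; in the combination $D^{*}_{ij}=\bar z_{ij}-\partial/\partial z_{ij}$ these two summands cancel and a short computation yields $D^{*}_{ij}T^\epsilon(n^0,z^0)=T^\epsilon(n^0,z^0)D^{*}_{ij}$. This cancellation is, I expect, \emph{the only genuine computation in the proof}, and the step one has to get exactly right; it is of course also the reason the representation decomposes the way it does.

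Granting this, the theorem follows from the standard Fock argument. The commuting positive self-adjoint number operators $N_{ij}=D^{*}_{ij}D_{ij}$ diagonalize $L^2(Z,\mu)$ into the orthogonal sum of their joint eigenspaces, $L^2(Z,\mu)=\bigoplus_{\alpha}H_\alpha$ with $\alpha$ running over $\mathbb N^{\,p(q-p)}$; here $H_0=\bigcap_{i,j}\ker D_{ij}=K^\epsilon$ and $H_\alpha=\bigl(\prod_{i,j}(D^{*}_{ij})^{\alpha_{ij}}\bigr)K^\epsilon$, the identity $L^2(Z,\mu)=\bigoplus_\alpha H_\alpha$ holding because the functions $\prod_{i,j}(D^{*}_{ij})^{\alpha_{ij}}z_{kl}^{\,m}$ already span a dense subspace. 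The operator $\prod_{i,j}(D^{*}_{ij})^{\alpha_{ij}}$ carries $K^\epsilon$ onto $H_\alpha$ and, after division by $\prod_{i,j}\sqrt{\alpha_{ij}!}$, does so unitarily. Since the $D^{*}_{ij}$ commute with $T^\epsilon(N)$, each $H_\alpha$ is $T^\epsilon$-invariant and these maps are intertwiners, so $T^\epsilon|_{H_\alpha}$ is equivalent to $T^\epsilon|_{K^\epsilon}$, which is irreducible by Theorem~3. As the index set $\{\alpha\}$ is countable, this is precisely the asserted decomposition of $T^\epsilon$ on $L^2(Z,\mu)$ into a countable direct sum of copies of the irreducible representation on $K^\epsilon$. (When $p=q$ one has $Z=\{0\}$ and $L^2(Z,\mu)=K^\epsilon=\mathbb C$, and the statement is trivial.)
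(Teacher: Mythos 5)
Your proof is correct, but it takes a genuinely different route from the paper's. The paper reduces to the one-row case and builds an increasing family of invariant subspaces $L_k$ spanned by the monomials $z^{k'}\bar z^{k''}$ with $k''\le k$; the multiplicity spaces are the successive orthogonal complements $L_{k,i}\subset L_{k+e_i}$, and the equivalence of each subrepresentation with the one on $K^\epsilon$ is read off by comparing matrix coefficients against the cyclic vectors $\bar z^{k+e_i}$ and $\mathbf 1$. You instead exhibit the commutant explicitly: your $D_{ij},D^*_{ij}$ are the ladder operators of the ``second'' (commuting) copy of the canonical commutation relations carried by $L^2(Z,\mu)$, and the joint eigenspaces of $N_{ij}=D^*_{ij}D_{ij}$ give a canonical grading $L^2(Z,\mu)=\bigoplus_\alpha(D^*)^\alpha K^\epsilon$ with unitary intertwiners $(D^*)^\alpha/\sqrt{\alpha!}$. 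The cancellation you flag as the crux, showing $D^*_{ij}$ commutes with $T^\epsilon(n^0,z^0)=M_\phi\circ\tau_{z^0}$, does check out (and $D_{ij}$ commutes trivially because $\phi$ is holomorphic), so each $H_\alpha$ is invariant and equivalent to $K^\epsilon$, which is irreducible by the preceding theorem. What your approach buys is a canonical decomposition (the paper notes its own is not unique) and no complement bookkeeping; what it costs is a little care with unbounded operators --- the commutation relations should be verified on the dense invariant domain of polynomials and then transferred to the closed subspaces $H_\alpha$ using that the operators $T^\epsilon(n^0,z^0)$ are bounded --- plus one explicit sentence on completeness, namely that $(D^*)^\alpha z^\beta=\bar z^\alpha z^\beta+(\text{lower order in }\bar z)$, so these vectors span all polynomials, which are dense in $L^2$ of the Gaussian measure. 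Both points are routine, and your parenthetical remarks already gesture at them.
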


\begin{proof}
It suffices to prove the desired assertion for the case of the representation in the space $L^2(Z,\mu)$, where $Z$ is the group of one-row matrices $z=(z_1,\ldots,z_m)$.

With every multiplicator $k=(k_1,\ldots,k_m)$, $k\geq 0$, we associate the Hilbert space~$L_K$ obtained as the norm completion of the set of all linear combinations of monomials of the form $z^{k'}z'^{k''}$ where $k'$ is arbitrary and $k''\leq k$. In particular, $L_0=K$.  It follows from the definition that
\begin{enumerate}
\item[(a)] the subspaces $L_k$ are invariant;
\item[(b)] $L_k \subset L_{k'}$ for $k<k'$; in particular, $ L_k \subset L_{k+e_i},$
$i=1, \dots ,n-1,$ where $\{e_i\}$ is the standard basis in $\mathbb Z^{n-1}$;
\item[(c)] the completion of the inductive limit of~$L_k$ with respect to the embeddings $L_k \subset L_{k'}$ coincides with the space $H^+_1.$
\end{enumerate}
For every multi-index~$k$ and index $i=1, \dots ,n-1$, denote by~$L_{k,i}$ the direct complement to~$L_k$ in the space~$L_{k+e_i}$; that is,
$$
L_{k+e_i}=L_{k,i} \otimes L_k.
$$
Obviously, the spaces $L_{k,i}$ are invariant, and the whole space~$H^+_1$ can be written (not uniquely) as a countable direct sum of such spaces.

Let us check that the representation~$T$ of the group~$N$ in each space~$L_{k,i}$ is equivalent to its representation~$T^+_{1,0}$ in the space~$K$.

Denote $f=\bar z^{k+e_i}.$  This vector lies in~$L_{k+e_i}$
and is orthogonal to the space~$L_k.$ Thus $f\in L_{k,i}.$ One can also easily see that $f$ is a cyclic vector in~$L_{k,i}$.
Analogously, the vector $f_0\equiv 1$ lies in the space~$K$ and is cyclic in this space. Hence it suffices to check that for every element
 $g\in N$, the scalar products
${\langle T(g) f,f \rangle}$ and
${\langle T(g) f_0,f_0 \rangle}$ differ by a factor depending only on $k$. This follows immediately  from the description of the representation operators; namely, according to this description,
\begin{equation*}
{\langle T(g) f_0,f_0 \rangle}=e^{\zeta _0} , \quad
{\langle T(g) f,f \rangle}=e^{\zeta _0}{\langle f,f \rangle}.
\qedhere
\end{equation*}
\end{proof}

\subsection{Description of all nondegenerate irreducible unitary representations of the group~$N$}

Now we will describe the unitary representation~$T_s^\epsilon$ associated with an arbitrary nondegenerate character
 $\chi_S^\epsilon(n)\exp(\tr(\epsilon sns^*))$.

Denote by $\pi_s$  the isomorphism of the space $H$ given by the formula
$$(\pi_s f)(z)=f(sz).$$
We define the operators $T_s^\epsilon(y)$   in the space~$H$ for elements of the group~$N$ by the following equation:
$$T_s^\epsilon(g) f(z) = \pi_s (T^\epsilon(g^s)f(z)).$$
The explicit formulas for the operators $T^\epsilon$ imply the following.

\begin{proposition} The unitary representation of the group~$N$ in the space~$H$ associated with a nondegenerate character~$\chi_s^\epsilon$ is given by the following formula:
\begin{multline}
T_s^\epsilon(n^0,z^0)f(z)
=\exp\bigg(\sum\limits_{i=1}^p\big( (\epsilon sn^0s^*)_{ii} -\frac12 w_i^0 w_i^{0*}\big)\\ - \sum\limits_{i,\epsilon_i=1}w_iw_i^{0*}
-\sum\limits_{i,\epsilon_i=-1}w_i^0w_i^*\bigg) f(w+w^0),
\end{multline}
where we have used the notation $w=sz$, $w^0=sz^0$.
\end{proposition}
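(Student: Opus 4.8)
The statement follows by a direct substitution into the defining relation $T_s^\epsilon(g)f=\pi_s\big(T^\epsilon(g^s)f\big)$, so the plan is simply to carry this out and regroup the terms. First I would record the two ingredients that enter the right-hand side. By the $S$-action on $N$ fixed at the beginning of this section ($n\mapsto sns^*$, $z\mapsto sz$), the element $g=(n^0,z^0)$ is sent to $g^s=(sn^0s^*,\,sz^0)$; and by Theorem~2 (read so that the second cross-sum runs over $\epsilon_i=-1$, consistently with its Corollary) the operator $T^\epsilon(g^s)$ acts on $L^2(Z,\mu)$ by
\[
T^\epsilon(sn^0s^*,sz^0)f(z)=\exp\!\Big(\sum_i\epsilon_i(sn^0s^*)_{ii}-\tfrac12\tr\big((sz^0)(sz^0)^*\big)-\sum_{i;\epsilon_i=1}z_i(sz^0)_i^{*}-\sum_{i;\epsilon_i=-1}(sz^0)_iz_i^{*}\Big)f\big(z+sz^0\big).
\]

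Next I would apply $\pi_s$, i.e.\ perform the substitution $z\mapsto sz$ throughout, and introduce the abbreviations $w=sz$, $w^0=sz^0$. The translation argument then becomes $f(w+w^0)$; the diagonal term becomes $\sum_i\epsilon_i(sn^0s^*)_{ii}=\sum_i(\epsilon sn^0s^*)_{ii}$ because $\epsilon=\diag(\epsilon_1,\dots,\epsilon_p)$; the self-term is $\tfrac12\tr\big((sz^0)(sz^0)^*\big)=\tfrac12\tr(w^0w^{0*})=\tfrac12\sum_i w_i^0w_i^{0*}$, the trace written as a sum over rows; and the two cross-sums become $\sum_{i;\epsilon_i=1}w_iw_i^{0*}$ and $\sum_{i;\epsilon_i=-1}w_i^0w_i^{*}$, since after the substitution the $i$-th summand involves exactly the $i$-th rows of $w$ and of $w^0$, so no re-indexing is needed. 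Collecting these contributions gives precisely the asserted formula.

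The computation is mechanical, so there is no real obstacle; the only places that need attention are formal. The first is reading the displayed formula of Theorem~2 in its Corollary-consistent form, so that the rows with $\epsilon_i=-1$ carry the cross-term $w_i^0w_i^{*}$ and not $w_iw_i^{0*}$ — this is the one spot where a conjugation/linearity slip could creep in. The second is checking that $\pi_s$ is a genuine unitary intertwiner up to a harmless scalar: since $\det s=\prod_i r_{ii}$ is real and positive, $z\mapsto sz$ multiplies Lebesgue measure on $Z$ by the constant $|\det s|^{\,2(q-p)}$, so a suitable rescaling of $\pi_s$ is unitary on $L^2(Z,dz)$, hence (after the unitary passage of Theorem~2) on $L^2(Z,\mu)$; unitarity of each $T_s^\epsilon(n^0,z^0)$ and the representation property $T_s^\epsilon(g_1g_2)=T_s^\epsilon(g_1)T_s^\epsilon(g_2)$ then follow for free from the corresponding facts for $T^\epsilon$ together with $(g_1g_2)^s=g_1^sg_2^s$.
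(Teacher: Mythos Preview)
Your proposal is correct and is exactly what the paper has in mind: the sentence preceding the proposition says only that ``the explicit formulas for the operators $T^\epsilon$ imply the following,'' and your argument is precisely that direct substitution of $g^s=(sn^0s^*,sz^0)$ into Theorem~2 followed by the change of variable $z\mapsto sz$ coming from $\pi_s$. Your side remarks---reading Theorem~2 so that the second cross-sum runs over $\epsilon_i=-1$ (as the Corollary forces), and noting that $\pi_s$ is unitary up to the constant Jacobian factor $|\det s|^{2(q-p)}$---are useful sanity checks that the paper leaves implicit.
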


Denote by $K_s^\epsilon$ the subspace of all functions $f(w)=f(sz)$ that are holomorphic with respect to the $i$th row of the matrix~$w$ for $\epsilon_i=1$ and anti-holomorphic with respect to this row for $\epsilon_i=-1$.

The formulas for the operators of the representation~$T_s^\epsilon$ imply the following theorem.

\begin{theorem} The spaces $K_s^\epsilon$ are closed invariant subspaces of the Hilbert space~$H^\epsilon$.
\end{theorem}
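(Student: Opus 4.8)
The plan is to reduce the claim for $T_s^\epsilon$ to the corresponding claim for $T^\epsilon$, which is already essentially contained in Definition~6 and the remarks following it (the Bargmann space $K^\epsilon$ is a closed invariant subspace of $L^2(Z,\mu)$). The key observation is that, by construction, $T_s^\epsilon(g) = \pi_s \circ T^\epsilon(g^s) \circ \pi_s^{-1}$, where $\pi_s$ is the substitution operator $(\pi_s f)(z) = f(sz)$, and that $K_s^\epsilon$ is by its very definition the image $\pi_s(K^\epsilon)$: indeed $f(w) = f(sz)$ is holomorphic (resp.\ anti-holomorphic) in the $i$th row of $w = sz$ precisely when the corresponding property holds after the linear change of variables, and $w \mapsto sz$ is a fixed invertible $\mathbb{C}$-linear map preserving the row structure (since $s$ is lower triangular, it acts within each row $z_i \mapsto$ combination of rows $z_j$, $j \le i$ — here I should double-check that the action $z \mapsto sz$ does not mix holomorphic and anti-holomorphic coordinates, which it does not, as $s$ has complex entries acting $\mathbb{C}$-linearly on the left).

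First I would record that $\pi_s$ is a bounded invertible operator on $H = L^2(Z, dz)$ (up to the Jacobian factor $|\det s|^{2(q-p)}$, which is a positive constant, so $\pi_s$ is a scalar multiple of a unitary); hence $\pi_s$ maps closed subspaces to closed subspaces. Second, I would verify $K_s^\epsilon = \pi_s(K^\epsilon)$ from the two definitions, as sketched above. Third, closedness of $K_s^\epsilon$ then follows immediately from closedness of $K^\epsilon$ (Definition~6 / consequence of Theorem~2 and its corollary). Fourth, for invariance: given $f \in K_s^\epsilon$, write $f = \pi_s h$ with $h \in K^\epsilon$; then $T_s^\epsilon(g) f = \pi_s T^\epsilon(g^s) h$, and since $K^\epsilon$ is $T^\epsilon$-invariant we have $T^\epsilon(g^s) h \in K^\epsilon$, whence $T_s^\epsilon(g) f \in \pi_s(K^\epsilon) = K_s^\epsilon$. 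One should also note that $g \mapsto g^s$ is an automorphism of $N$, so ranging $g$ over $N$ is the same as ranging $g^s$ over $N$, and no element is lost.

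Alternatively — and this is probably the cleaner writeup — one can argue directly from the explicit formula in Proposition~2: the factor multiplying $f(w+w^0)$ is, for each row $i$ with $\epsilon_i = 1$, holomorphic in $w_i$ (it involves $w_i$ only through $w_i w_i^{0*}$, linear in $w_i$, no $\bar w_i$), and for each row with $\epsilon_i = -1$ it is anti-holomorphic in $w_i$; and the shift $f(w) \mapsto f(w + w^0)$ preserves holomorphy/anti-holomorphy in each row. Hence $T_s^\epsilon(n^0, z^0)$ maps functions holomorphic/anti-holomorphic in the prescribed rows to functions with the same property, which is exactly invariance of $K_s^\epsilon$; closedness is the standard fact that a subspace cut out by holomorphy conditions (locally uniform limits, Bergman-type space) is closed in $L^2$.

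The main obstacle is not any single hard estimate but rather the bookkeeping in the second route: one must check carefully that in the exponential prefactor of Proposition~2 the terms $(\epsilon s n^0 s^*)_{ii}$, $w_i^0 w_i^{0*}$, $w_i w_i^{0*}$, $w_i^0 w_i^*$ depend on the \emph{variable} $w$ (equivalently $z$) only through $w_i$ (for the $i$th factor) and with the correct holomorphic type dictated by $\epsilon_i$ — in particular that the constant-in-$w$ pieces ($(\epsilon s n^0 s^*)_{ii}$ and $w_i^0 w_i^{0*}$) are harmless, and that no cross term of the form $w_i \bar w_i$ survives. Once that is confirmed, both closedness and invariance are immediate, and I would present the short conceptual proof via $\pi_s$ as the main argument, remarking that it also follows by inspection of the formula in Proposition~2.
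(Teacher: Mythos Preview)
Your proposal is correct, and your second route---reading invariance and closedness directly off the explicit formula for $T_s^\epsilon$---is exactly what the paper does: its entire proof is the sentence ``The formulas for the operators of the representation~$T_s^\epsilon$ imply the following theorem,'' i.e., inspection of Proposition~1. Your preferred first route via the conjugation $T_s^\epsilon(g)=\pi_s\, T^\epsilon(g^s)\,\pi_s^{-1}$ together with $K_s^\epsilon=\pi_s(K^\epsilon)$ is a clean repackaging of the same observation and is equally valid; it has the minor advantage of making the reduction to the already-established case $s=e$ explicit rather than repeating the holomorphy check row by row.
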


These subspaces will be called the Bargmann subspaces associated with the characters~$\chi_s^\epsilon$.

Thus, with each nondegenerate character~$\chi_s^\epsilon$ we have associated a unitary representation~$T_s^\epsilon$ of the group~$N$ in the Hilbert space~$K_s^\epsilon$ conjugate
to the representation~$T^\epsilon$ in the space~$K^\epsilon$.

By analogy with the representations~$T^\epsilon$ of the group~$N$ considered above, one can establish that
\begin{enumerate}
\item[(a)]
the representations $T_s^\epsilon$ are irreducible;
\item[(b)]
the space $K^\epsilon_s$ can be written as the tensor product of the Hilbert spaces of functions on~$W_i$, where $W_i$ is the $ i$th row of the matrix~$W$, that are holomorphic for $\epsilon_i=1$ and anti-holomoprhic for $\epsilon_i=-1$;
\item[(c)]
the representation of the group~$N$ in the Hilbert space~$H_s^\epsilon$ is a countable multiple of its irreducible representation in the space~$K_s^\epsilon$.
\end{enumerate}

 \begin{theorem}The representations $T_s^\epsilon$ of the group~$N$ are pairwise nonequivalent.
\end{theorem}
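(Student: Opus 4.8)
The plan is to separate these representations by their central characters. Each $T_s^\epsilon$ is an irreducible unitary representation of $N$ (property~(a) above), so by Schur's lemma the center of $N$ --- the subgroup $A$ of pairs $(n,0)$ with $n$ skew-Hermitian --- acts in $T_s^\epsilon$ by scalars, and the resulting unitary character of $A$, the central character of $T_s^\epsilon$, is an invariant of the unitary equivalence class. It is therefore enough to show that the central characters of the $T_s^\epsilon$ are pairwise distinct, as $\epsilon$ runs over $\{\pm1\}^p$ and $s$ over the lower-triangular complex matrices of order $p$ with positive diagonal.

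First I would identify the central character. Setting $z^0=0$ in the formula of Theorem~2 shows that $T^\epsilon(n^0,0)f=\chi^\epsilon(n^0)f$, so $T^\epsilon$ has central character $\chi^\epsilon$. By its construction, $T_s^\epsilon$ is $T^\epsilon$ precomposed with the automorphism $g\mapsto g^s$ of $N$, where $(n,z)^s=(sns^*,sz)$, and then conjugated by the operator $\pi_s$; since conjugation by $\pi_s$ fixes scalar operators, and $g\mapsto g^s$ carries the central element $(n,0)$ to $(sns^*,0)$, the center acts in $T_s^\epsilon$ by the scalar $n\mapsto\chi^\epsilon(sns^*)=\chi_s^\epsilon(n)$ (indeed, in Proposition~1 the element $(n^0,0)$ acts through the factor $\exp(\tr(\epsilon sn^0s^*))=\chi_s^\epsilon(n^0)$). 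Thus the central character of $T_s^\epsilon$ equals $\chi_s^\epsilon$, and it remains to show that the assignment $(\epsilon,s)\mapsto\chi_s^\epsilon$ is injective.

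This injectivity I would prove in two steps. If $\chi_s^\epsilon=\chi_{s'}^{\epsilon'}$, then $\chi^\epsilon$ and $\chi^{\epsilon'}$ lie in one $S$-orbit; since by Theorem~1 there are exactly $2^p$ nondegenerate $S$-orbits, each represented by one of the $\chi^\epsilon$, these orbits are pairwise distinct and hence $\epsilon=\epsilon'$. For fixed $\epsilon$, write $\chi^\epsilon(sns^*)=\exp(\tr((s^*\epsilon s)n))$. Since $\tr(Mn)$ is a nondegenerate real pairing between Hermitian matrices $M$ and skew-Hermitian matrices $n$ --- test $M$ against $n=iD$ with $D$ real diagonal, and against $n=E_{jk}-E_{kj}$ and $n=i(E_{jk}+E_{kj})$, which recover the diagonal, the imaginary off-diagonal, and the real off-diagonal entries of $M$ --- the equality $\chi_s^\epsilon=\chi_{s'}^\epsilon$ amounts to the matrix identity $s^*\epsilon s=(s')^{*}\epsilon s'$, i.e.\ to $u^*\epsilon u=\epsilon$ with $u:=s(s')^{-1}$, which is again lower triangular with positive diagonal. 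A short induction on $p$ then forces $u=e_p$: splitting off the last row and column, the $(p,p)$-entry of $u^*\epsilon u=\epsilon$ yields $u_{pp}^2=1$, hence $u_{pp}=1$ by positivity; the off-diagonal block then vanishes because $\epsilon_p\ne0$; and the leading $(p-1)\times(p-1)$ block of $u$ satisfies a relation of the same type, so it equals $e_{p-1}$ by the induction hypothesis. Thus $s=s'$.

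Combining the two steps, distinct pairs $(\epsilon,s)$ have distinct central characters, so the representations $T_s^\epsilon$ are pairwise nonequivalent; in fact this yields the sharper statement that $T_s^\epsilon\cong T_{s'}^{\epsilon'}$ precisely when $(\epsilon,s)=(\epsilon',s')$. The one step that is not a formal manipulation is the last one, the uniqueness of the ``indefinite Cholesky'' factorization $s\mapsto s^*\epsilon s$ on lower-triangular positive-diagonal matrices (equivalently, triviality of the $S$-stabilizer of $\chi^\epsilon$); it is elementary, but it is the only place any care is needed. Finally, the argument applies without change in the degenerate case $p=q$, where $N=A$ is abelian, each $K_s^\epsilon$ is one-dimensional and $T_s^\epsilon$ is literally the character $\chi_s^\epsilon$, so that pairwise non-equivalence reduces exactly to the injectivity established above.
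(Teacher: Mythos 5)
Your proof is correct, and it follows the same basic route as the paper --- separating the representations by a scalar invariant read off from the explicit operator formulas --- but it completes two steps the paper leaves as assertions. The paper computes the spherical function $\phi_s^\epsilon(n,z)=\langle T_s^\epsilon(n,z)\mathbf{1},\mathbf{1}\rangle=\chi_s^\epsilon(n-\tfrac12 zz^*)$ and concludes by declaring these functions pairwise distinct; you instead isolate the part of this datum that is an honest equivalence invariant, namely the central character $n\mapsto\chi_s^\epsilon(n)=\exp(\tr(s^*\epsilon s\,n))$ obtained by restricting to $(n,0)$, and justify via Schur's lemma why equivalent representations must share it (a single matrix coefficient against a non-canonical cyclic vector does not by itself separate equivalence classes, so your variant is the more robust one). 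More importantly, you actually prove the pairwise distinctness that the paper asserts: using the nondegeneracy of the pairing $\tr(Mn)$ between Hermitian and skew-Hermitian matrices you reduce it to the injectivity of $s\mapsto s^*\epsilon s$ on lower-triangular matrices with positive diagonal, i.e.\ to the triviality of the stabilizer $\{u:u^*\epsilon u=\epsilon\}$, and your block induction on the last row and column (the $(p,p)$ entry forces $u_{pp}=1$, the off-diagonal block then vanishes because $\epsilon_p\neq0$, and the leading block inherits the same relation) is correct; the appeal to Theorem~1 to separate distinct $\epsilon$'s is legitimate within the paper's framework. So the two arguments rest on the same fact --- the characters $\chi_s^\epsilon$ are pairwise distinct --- but your write-up supplies the proof of that fact and a cleaner reason why it implies nonequivalence.
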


\begin{proof}
Let us compute the spherical functions $\phi_s^\epsilon(n,z)$  of these representations, i.e., the functions
$\phi^\epsilon_s (g)=\langle T^\epsilon_s (g) {\bf 1}, {\bf 1}\rangle,$ where ${\bf 1}$ is the vacuum vector in the space of $T^\epsilon_s$. It follows from the explicit formula for the operators of this representation that $\phi^\epsilon_s (n,z)=\chi_s^\epsilon(n-\frac12 zz^*)$. Since these functions are pairwise distinct, the corresponding representations are pairwise nonequivalent.
\end{proof}

\begin{corollary} The representations $T_s^\epsilon$ of the group~$N$ form a complete system of nondegenerate irreducible unitary representations.
\end{corollary}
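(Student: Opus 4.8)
The plan is to deduce completeness from the two facts already established — irreducibility of each $T_s^\epsilon$ (property~(a) above) and their pairwise nonequivalence — by proving the converse: that \emph{every} nondegenerate irreducible unitary representation of $N$ is unitarily equivalent to one of the $T_s^\epsilon$. First I would restrict an arbitrary such representation $\rho$ to the center $A$ of $N$. By Schur's lemma $A$ acts in $\rho$ by a scalar unitary character $\chi\in\mathcal A$; by the meaning of ``nondegenerate'' adopted here (Definition~2), this $\chi$ is a nondegenerate character, so Theorem~1 places it in one of the $2^p$ nondegenerate $S$-orbits, i.e.\ $\chi=\chi_s^\epsilon$ for suitable $s\in S$ and $\epsilon=\diag(\epsilon_1,\dots,\epsilon_p)$, $\epsilon_i=\pm1$. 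Setting $z^0=0$ in the formula of Proposition~1 shows that $T_s^\epsilon(n^0,0)$ acts as multiplication by $\chi^\epsilon(sn^0s^*)=\chi_s^\epsilon(n^0)$, so $\rho$ and $T_s^\epsilon$ have the same central character $\chi_s^\epsilon$.

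The key step is then a Stone--von Neumann type uniqueness. Since $N$ is the central extension $1\to A\to N\to Z\to1$ of the vector group $Z$ by the vector group $A$ through the alternating form $\omega(z_1,z_2)=z_1z_2^*-z_2z_1^*$, and since nondegeneracy of $\chi_s^\epsilon$ is exactly nondegeneracy of the real symplectic form on $Z$ obtained by composing $\omega$ with $\chi_s^\epsilon$ (when $q>p$ one moreover has $[N,N]=A$, so $\chi_s^\epsilon$ does not factor through a proper quotient of $A$), the Stone--von Neumann theorem guarantees that, up to unitary equivalence, there is a \emph{unique} irreducible unitary representation of $N$ with central character $\chi_s^\epsilon$; hence $\rho\cong T_s^\epsilon$. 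An alternative argument using only the material of Section~3 runs as follows: under Fourier transform in the central variable the regular representation decomposes as $H=\int^{\otimes}H_\chi\,d\chi$, the subspace on which the center acts by $\chi_s^\epsilon$ is precisely the fiber $H_s^\epsilon$, and that fiber was shown to be a countable multiple of the irreducible representation in $K_s^\epsilon$; since the nilpotent group $N$ is of type~I, $\rho$ occurs in $H$, and comparison of central characters again forces $\rho\cong T_s^\epsilon$. Either way, combined with the pairwise nonequivalence of the $T_s^\epsilon$ established above, this shows that the $T_s^\epsilon$ exhaust the nondegenerate irreducible unitary representations, which is the assertion of the corollary.

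The step I expect to be the main obstacle is making this last reduction fully rigorous: one must pin down the precise form of Stone--von Neumann uniqueness valid for this family of (possibly very degenerate) Heisenberg-type groups — equivalently, verify that over a nondegenerate character the whole fiber $H_s^\epsilon$ really does split into copies of a single irreducible — and one must treat the boundary case $q=p$ separately, where $Z$ is trivial, $N=A$ is the abelian group of skew-Hermitian matrices, and the notion of a ``nondegenerate representation'' has to be read directly off Definition~2 and Theorem~1.
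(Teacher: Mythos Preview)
The paper gives no explicit proof of this corollary: it is stated bare, immediately after Theorem~6, as a direct consequence of the preceding material (irreducibility of each $T_s^\epsilon$, their pairwise nonequivalence, and Theorem~1 classifying the nondegenerate characters). The implicit reasoning is exactly what you spell out: an arbitrary nondegenerate irreducible unitary representation of $N$ has, by Schur, a nondegenerate central character, hence some $\chi_s^\epsilon$ by Theorem~1, and then Heisenberg-type uniqueness forces equivalence with $T_s^\epsilon$. The paper alludes to this last step only in the closing paragraph of Section~3, where it remarks that the representation theory of the general Heisenberg group $N$ ``does not differ essentially from the representation theory of the classical three-dimensional Heisenberg group.''

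So your proposal is correct and is precisely the natural unpacking of what the paper leaves tacit; there is no alternative argument in the paper to compare against. Your caution about the case $q=p$ is well placed but easily dispatched: there $Z=0$, $N=A$ is abelian, the Bargmann space $K^\epsilon$ is one-dimensional, and $T_s^\epsilon$ reduces to the character $\chi_s^\epsilon$ itself, so the corollary is just Theorem~1 verbatim. The only point worth tightening is the phrase ``by the meaning of `nondegenerate' adopted here'': the paper never actually defines a nondegenerate \emph{representation} of $N$, only nondegenerate characters of $A$, so you are (reasonably) supplying the definition ``central character is nondegenerate'' rather than quoting it.
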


To conclude this section, we emphasize that the outlined scheme of considering representations of a general Heisenberg group, i.e., the central extension of a commutative group by a commutative group and a symplectic 2-form, is quite general and does not differ essentially from the representation theory of the classical three-dimensional Heisenberg group. Being nilpotent, such groups have no faithful special irreducible representations: all nontrivial cocycles with values in irreducible representations are just additive characters with values in the one-dimensional identity representation. This result was apparently proved by several authors (see, e.g.,~\cite{Gui}). At the same time, if we give up irreducibility, then nontrivial cohomology appears, for instance, in the regular representation or in any unitary representation weakly containing the identity representation. We will use this fact when studying representations of the Iwasawa group, i.e., the semidirect product of a nilpotent group and the Heisenberg group.

\section{Nonunitary representations of the Iwasawa group~$P$}

\subsection{Almost invariant measures on the group~$S$
and nonunitary representations}

\begin{definition} We say that a measure $d\nu(s)$ on~$S$ is almost invariant if it is quasi-invariant with respect to the transformations
 $s\rightarrow s_0$ of the group~$S$ and its derivatives $\frac{d\nu(ss_0)}{d\nu(s)}$ are bounded for every $s_0\in S$.
\end{definition}

In particular, the Haar measure on~$S$, invariant with respect to the right translations $s\rightarrow ss_0$, is an almost invariant measure on~$S$.

With each $S$-orbit in the space of nondegenerate unitary representations~$T^\epsilon_s$ and each almost invariant measure~$d\nu (s)$ on~$S$ we associate a nonunitary representation of the group~$N$.

By definition, this representation is realized in the direct integral
\begin{equation}
{\mathcal K}^\epsilon_s=\int\limits_S^\bigoplus K^\epsilon_s\, d\nu(s)
\end{equation}
over $S$ of the Hilbert spaces~$K^\epsilon_s$ acted on by the representations~$T^\epsilon_s$ of~$N$.

The actions on $K^\epsilon_s$ of the unitary representations~$T^\epsilon_s$ induce a unitary representation of the group~$N$ on the whole space~$\mathcal K^\epsilon$.

Further, we define the operators $T^\epsilon_s$ corresponding to elements of the group~$S$ in the space~$\mathcal K^\epsilon$ as right translation operators:
$$T^\epsilon(s_0)f(s)=f(ss_0) \quad \text{for every} \quad s_0\in S.$$
Since the measure $d\nu (s)$ is almost invariant, it follows that these operators are defined and bounded on the whole Hilbert space~$\mathcal K^\epsilon$.

One can easily check that the operators thus defined for the subgroups~$N$ and~$S$ together generate a (nonunitary in general) representation $T=T_\epsilon$ of the whole Iwasawa group $P=S\rightthreetimes N$. Clearly, this representation is unitary in the case where
 $d\nu(s)$ is the Haar measure.

 \begin{theorem} The representations $T_\epsilon$
 of the Iwasawa group~$P$ are operator-irreducible and pairwise nonequivalent.
\end{theorem}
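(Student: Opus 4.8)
The plan is to establish operator-irreducibility first, then nonequivalence, using the structural decomposition already set up in the excerpt. For irreducibility, suppose $B$ is a bounded operator on $\mathcal K^\epsilon$ commuting with all operators $T_\epsilon(g)$, $g\in P$. First I would exploit commutation with $N$. Since the representation of $N$ on $\mathcal K^\epsilon = \int_S^\oplus K^\epsilon_s\,d\nu(s)$ is the direct integral of the mutually inequivalent irreducibles $T^\epsilon_s$ (irreducibility of each $K^\epsilon_s$ is item (a) after Theorem~7; pairwise nonequivalence across the $S$-orbit is Theorem~9 applied with the parametrization by $s$ modulo the stabilizer), a standard direct-integral argument shows that any operator commuting with this $N$-action is decomposable, i.e.\ acts as multiplication by a measurable field of scalars $\beta(s)$ — the $K^\epsilon_s$ are the isotypic components and Schur's lemma forces a scalar on each. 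Next I would bring in the $S$-action: commutation of $B$ with the right-translation operators $T^\epsilon(s_0)f(s) = f(ss_0)$ forces $\beta(ss_0) = \beta(s)$ for $\nu$-a.e.\ $s$, for every $s_0\in S$. Since $S$ acts transitively on itself by right translations and $d\nu$ has full support (being quasi-invariant), $\beta$ is $\nu$-a.e.\ constant. Hence $B$ is a scalar, proving operator-irreducibility.

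For pairwise nonequivalence, the representations $T_\epsilon$ are indexed by $\epsilon = \diag(\epsilon_1,\dots,\epsilon_p)$ with $\epsilon_i = \pm 1$ (the measure $d\nu$ being fixed, or else one also allows it to vary — I would treat the $\epsilon$-dependence, which is the essential one). Suppose $U\colon \mathcal K^\epsilon \to \mathcal K^{\epsilon'}$ intertwines $T_\epsilon$ and $T_{\epsilon'}$. Restricting to $N$, $U$ intertwines the direct integrals $\int_S^\oplus K^\epsilon_s\,d\nu(s)$ and $\int_S^\oplus K^{\epsilon'}_s\,d\nu(s)$; since the fibers appearing are the irreducibles $T^\epsilon_s$ versus $T^{\epsilon'}_s$, whose equivalence classes are distinguished by the spherical functions $\phi^\epsilon_s(n,z) = \chi^\epsilon_s(n - \tfrac12 zz^*)$ computed in the proof of Theorem~9, the intertwiner $U$ must carry the fiber over $s$ to the fiber over the unique $s'$ with $\chi^{\epsilon'}_{s'} = \chi^\epsilon_s$. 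Comparing the form $\chi^\epsilon_s(n) = \exp(\tr(\epsilon s n s^*))$ across the two $S$-orbits shows that if $\epsilon \neq \epsilon'$ the orbits are disjoint (Theorem~1 counts exactly $2^p$ distinct orbits), so no such measurable matching of fibers exists on a positive-measure set — a contradiction. Therefore $\epsilon = \epsilon'$.

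The main obstacle, and the step I would write out most carefully, is the direct-integral Schur argument: justifying that a bounded operator commuting with $\int_S^\oplus T^\epsilon_s\,d\nu(s)$ is necessarily decomposable and acts fiberwise by scalars. This requires that the field $s \mapsto K^\epsilon_s$ be a measurable field of Hilbert spaces with a measurable field of irreducible, pairwise inequivalent $N$-representations, and that $(S,d\nu)$ be a standard measure space — all of which hold here ($S$ is a Lie group, $d\nu$ is Haar-equivalent, and the $K^\epsilon_s$ are explicitly Bargmann spaces varying analytically in $s$), but the measurability bookkeeping is where the real work lies. A secondary subtlety is that $T_\epsilon$ need not be unitary, so one cannot invoke the unitary version of the decomposable-operator theorem verbatim; however, the $N$-part of the representation \emph{is} unitary on each fiber (only the $S$-translations introduce the Radon–Nikodym distortion, and those act by composition, not by multiplication), so the Schur step takes place entirely within the unitary category, and the non-unitarity only enters through the harmless covariance relation $\beta(ss_0)=\beta(s)$. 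Once these two points are handled, the remainder is the bookkeeping sketched above.
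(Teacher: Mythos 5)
Your proposal follows essentially the same route as the paper: restrict to $N$, use that the fiber representations $T^\epsilon_s$ are irreducible and pairwise inequivalent to conclude any commuting operator is decomposable and fiberwise scalar, then use the (ergodic) right-translation action of $S$ to force the scalar field to be constant, with nonequivalence likewise read off from the inequivalence of the $N$-restrictions. The paper's proof is a three-line version of exactly this argument; your additional care about the direct-integral Schur lemma and measurability is a legitimate filling-in of details the paper leaves implicit.
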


\begin{proof}
Since the representations~$T_s^\epsilon$ of the group~$N$ are pairwise nonequivalent for different~$\epsilon$ and~$s$, it follows that
the representations $T_\epsilon$ are pairwise nonequivalent.

Let us prove that they are irreducible. Let~$A$ be an arbitrary bounded operator in the space~$H^\epsilon$ that commutes with the operators of~$T_\epsilon$. Since the representations~$T_s^\epsilon$ of the subgroup~$N$ are pairwise nonequivalent, it follows that this operator is a multiple of the identity operator on each subspace~$K_s^\epsilon$.

Since the measure $\nu(s)$ on~$S$ is ergodic, this operator is constant.
\end{proof}

\begin{theorem}
The representation $T$ of the group~$P$ associated with an almost invariant measure~$\nu$ is equivalent to the representation of this group in the Hilbert space $L^2(S,\mu, K)$ associated with the Haar measure, where the representation operators corresponding to elements of the subgroup~$N$ are unitary and have the same form, and the operators corresponding to elements of the subgroup~$S$ are given by the formula
$$
(T(s_0)F)(s)=F(ss_0)\frac{a(s)}{a(ss_0)}, \quad \mbox{where} \quad |a(s)|^2=\frac{d\mu(s)}{d\nu(s)}.
$$
\end{theorem}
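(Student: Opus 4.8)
The plan is to realize the equivalence by an explicit unitary intertwining operator of ``change of density'' type, which absorbs the passage from $\nu$ to the Haar measure $\mu$ into a scalar multiplicator acting fiberwise on the direct integral.

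First I would record the measure-theoretic input that makes the statement meaningful: an almost invariant measure $\nu$ on the locally compact group $S$ is automatically equivalent to the (right-invariant) Haar measure $\mu$. Indeed, $\nu$ is quasi-invariant under all right translations $s\mapsto ss_0$, hence lies in the unique quasi-invariant measure class for the transitive action of $S$ on itself, which is the class of $\mu$. Therefore $d\mu/d\nu$ exists and is finite and strictly positive $\nu$-almost everywhere, and we may fix, as in the statement, a measurable function $a$ on $S$ with $|a(s)|^2=\frac{d\mu(s)}{d\nu(s)}$. I would then define
$$
W\colon \mathcal K^\epsilon=\int\limits_S^{\bigoplus}K^\epsilon_s\,d\nu(s)\ \longrightarrow\ L^2(S,\mu,K),\qquad (WF)(s)=a(s)^{-1}F(s),
$$
where the fibers $K^\epsilon_s$ are identified in the usual way with a fixed model space $K$: they are all separable infinite-dimensional Hilbert spaces carrying the conjugate representations $T^\epsilon_s$ of $N$, so the measurable field of Hilbert spaces is trivializable. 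That $W$ is a well-defined isometry onto $L^2(S,\mu,K)$ is the one-line identity $\int_S|a(s)|^{-2}\|F(s)\|^2\,d\mu(s)=\int_S\|F(s)\|^2\,d\nu(s)$, which is precisely the defining property of $a$.

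It remains to transport the two families of generators of $P=S\rightthreetimes N$. The operators of $N$ act on $\mathcal K^\epsilon$ fiberwise, by the unitary operators $T^\epsilon_s$ on $K^\epsilon_s$; since $W$ is itself fiberwise and acts on the fiber over $s$ by the scalar $a(s)^{-1}$, it commutes with each of them, so in the new realization the operators of $N$ keep the same fiberwise formulas and remain unitary. For $s_0\in S$ the operator on $\mathcal K^\epsilon$ is right translation, $(T(s_0)F)(s)=F(ss_0)$; a direct substitution, using that $a$ enters only through a coboundary, shows that $WT(s_0)W^{-1}$ is exactly the operator $(T(s_0)G)(s)=\frac{a(s)}{a(ss_0)}G(ss_0)$ on $L^2(S,\mu,K)$, i.e. right translation twisted by the multiplicator $a(s)/a(ss_0)$. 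Its boundedness is the statement that this multiplicator, equivalently $d\nu(ss_0)/d\nu(s)$, is bounded in $s$ — which is exactly the almost-invariance of $\nu$; and for $\nu=\mu$ the function $a$ is constant, so one recovers the original unitary representation.

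Since $W$ is unitary and intertwines both generating subgroups, it intertwines the whole representation $T=T_\epsilon$ of $P$ with the representation described in the statement. I expect the only genuinely non-formal point to be the first step: the fact that an almost invariant measure is forced to be equivalent to Haar measure, so that $a$ is well defined and finite almost everywhere. A secondary point worth writing out is the compatibility of the identification $K^\epsilon_s\cong K$ with the $N$-action, which is what makes ``the same form'' literally true rather than true merely up to a unitary on each fiber.
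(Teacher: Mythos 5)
Your proof is correct and follows essentially the same route as the paper's: the intertwining operator is multiplication by the density relating $\nu$ to the Haar measure (the paper sets $F=fa$), which leaves the fiberwise $N$-operators untouched and twists the right translations by the coboundary multiplicator of $a$. The only discrepancy is a reciprocal: with your convention $(WF)(s)=a(s)^{-1}F(s)$ and $|a|^2=d\mu/d\nu$ the multiplicator comes out as $a(ss_0)/a(s)$ rather than $a(s)/a(ss_0)$, but the paper's statement and its own two-line proof are already inconsistent on exactly this point, so this is a harmless normalization issue rather than a gap.
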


\begin{proof}
We use the following equation:
$$
\int\limits_S|f(ss_0)|^2\,d\nu(s)=\int\limits_S|f(ss_0)a(s)|^2\,d\mu(s).
$$
Putting $F(s)=f(s)a(s)$, we can write the right-hand side in the form
$$
\int\limits_S|F(ss_0)\frac{a(s)}{a(ss_0)}|^2\,d\mu(s).
$$
The theorem follows immediately.
\end{proof}

\section{Special nonunitary representations of the Iwasawa group}

\subsection{Definition of a special representation}
\begin{definition}
A  (not necessarily unitary) representation $T$ of a locally compact group~$G$ in a Hilbert space~$H$ is called special if its $1$-cohomology group is nontrivial; this means that there exists a nontrivial continuous map $\beta:G\rightarrow H$ satisfying the relation
$$\beta(g_1g_2)=T(g_1)\beta(g_2)+\beta(g_1)$$
(a $1$-cocycle). The nontriviality means that this cocycle cannot be written in the form $\beta(g)=T(g)\xi-\xi$ with $\xi\in H$.
\end{definition}

The property of being special for a representation~$T$ of a group~$G$ is crucial for constructing a representation of the group~$G^X$ of $G$-currents, i.e., the group of maps $G\rightarrow X$, where $X$ is a space with a probability measure, with the pointwise multiplication. Namely, if $T$ is an irreducible unitary special representation of~$G$, then a Fock construction associates with~$T$ an irreducible representation of~$G^X$.

The purpose of this paper is to select, in the class of almost invariant measures~$\nu(s)$ on the group~$S$, those measures for which the representation of the group~$P$ in the space~$H^\epsilon$ described above is special, and then extend it to a representation of the corresponding group~$U(p,q)$.

\subsection{Sufficient conditions for a representation of the Iwasawa group~$P$ associated with an almost invariant measure to be special}

For simplicity, we restrict ourselves to representations in the spaces~$\mathcal K^\epsilon$ with $\epsilon=(1,1,\ldots,1)$.  In this case,
$$
{\mathcal K}=\int\limits^\otimes_S K_s \,d \nu(s),
$$
where $K_s = K$ are the spaces of holomorphic functions of~$Z$.

The irreducibility
criterion for the case of an arbitrary~$\epsilon$ is analogous.

 \begin{theorem} Let $\nu$ be an almost invariant measure on~$S$ for which there exists a scalar positive function~$f(s)$ satisfying the following conditions:
\begin{enumerate}
\item[\rm(i)]
$\int\limits_Sf(s)^2\,d\nu(s)=\infty$;
\item[\rm(ii)]
$\int\limits_S |f(ss_0)-f(s)|^2\,d\nu(s)<\infty$ for every $s_0\in S$;
\item[\rm(iii)]
$\int\limits_S (1\!-\!\Re e^{\tr(s(n-\frac12 zz^*)s^*)} )f^2(s) d\nu(s)\!<\!\infty$
for every  $(n_0,z_0)\!\in\!N$.
\end{enumerate}

Then the representation~$T$ of the Iwasawa group associated with the measure~$d\nu (s)$ has a nontrivial $1$-cocycle of the form
$$\beta(g)=T(g) f(s) {\bf 1} -f(s){\bf 1}$$
and thus is a special representation.
\end{theorem}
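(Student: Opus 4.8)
The plan is to verify directly that the map $\beta(g) = T(g)f(s)\mathbf{1} - f(s)\mathbf{1}$ takes values in the Hilbert space $\mathcal K$, satisfies the cocycle identity, and is not a coboundary. The cocycle identity is automatic: for any vector $\xi$ in a representation space, $g \mapsto T(g)\xi - \xi$ formally satisfies $\beta(g_1g_2) = T(g_1)\beta(g_2) + \beta(g_1)$; the only issue is whether $\beta(g)$ actually lies in $\mathcal K$, i.e.\ has finite norm, since $f(s)\mathbf 1$ itself does not (that is condition~(i)). So the real content is the norm estimate, and this is where conditions (ii) and (iii) come in, one for the $S$-part and one for the $N$-part of $P = S \rightthreetimes N$.

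First I would treat an element $s_0 \in S$. By Theorem~9, $T(s_0)$ acts on $\mathcal K = \int_S^\oplus K_s\, d\nu(s)$ essentially by the right translation $F(s) \mapsto F(ss_0)$ (up to the unitary identification of the fibers, which fixes the vacuum $\mathbf 1$ and hence fixes $f(s)\mathbf 1$ fiberwise). Therefore
$$
\|\beta(s_0)\|^2 = \int_S \|f(ss_0)\mathbf 1 - f(s)\mathbf 1\|_{K_s}^2\, d\nu(s) = \int_S |f(ss_0) - f(s)|^2\, d\nu(s),
$$
using $\|\mathbf 1\| = 1$, which is finite precisely by~(ii). Next, for an element $(n^0,z^0) \in N$, the operator $T(n^0,z^0)$ acts fiberwise by the unitary operators $T_s^\epsilon(n^0,z^0)$ of Proposition~1, and applied to the vacuum it multiplies $\mathbf 1$ by the spherical function computed in the proof of Theorem~8, namely $\phi_s^\epsilon(n^0,z^0) = \chi_s^\epsilon(n^0 - \frac12 z^0 z^{0*}) = \exp(\tr(\epsilon s(n^0 - \frac12 z^0 z^{0*})s^*))$ (here with $\epsilon = \mathrm{id}$). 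Hence
$$
\|\beta(n^0,z^0)\|^2 = \int_S |\phi_s^\epsilon(n^0,z^0) - 1|^2 f(s)^2\, d\nu(s) = \int_S 2\bigl(1 - \Re\, e^{\tr(s(n^0 - \frac12 z^0 z^{0*})s^*)}\bigr) f(s)^2\, d\nu(s),
$$
where I used that $|\phi_s^\epsilon| = 1$ (the spherical function is a unit-modulus character) so that $|\phi - 1|^2 = 2 - 2\Re\phi$. This is finite by~(iii). A general element of $P$ factors as $(n^0,z^0)s_0$, and the cocycle relation $\beta((n^0,z^0)s_0) = T(n^0,z^0)\beta(s_0) + \beta(n^0,z^0)$ together with boundedness of $T(n^0,z^0)$ (it is unitary on each fiber, hence bounded on $\mathcal K$) shows $\beta$ is everywhere finite-valued; continuity in $g$ follows from continuity of $s_0 \mapsto \beta(s_0)$ and $(n^0,z^0)\mapsto\beta(n^0,z^0)$, which one reads off from dominated convergence in the above integrals.

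Finally I would show the cocycle is nontrivial, i.e.\ that there is no $\xi \in \mathcal K$ with $\beta(g) = T(g)\xi - \xi$. The natural candidate would be $\xi = f(s)\mathbf 1$, but this is not in $\mathcal K$ by~(i); one must rule out every other $\xi$. The argument: if such a $\xi$ existed, then $T(g)(\xi - f(s)\mathbf 1) = \xi - f(s)\mathbf 1$ would hold for all $g$ in the sense that $\xi - f(s)\mathbf 1$ is a $T$-invariant element, but it lies in the larger space of measurable fields, not in $\mathcal K$. Restricting to $N$: invariance under all $T_s^\epsilon(n^0,z^0)$ forces the fiber components to be proportional to the vacuum $\mathbf 1$ in each $K_s$ (since $T_s^\epsilon$ is irreducible on $K_s$ by item~(a) after Theorem~7, its only invariant vectors modulo scalars come from triviality on the center, which fails for a nondegenerate character — so in fact $T_s^\epsilon$ has \emph{no} nonzero invariant vector, forcing $\xi = f(s)\mathbf 1$ exactly as a field). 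Then $\xi = f(s)\mathbf 1 \notin \mathcal K$, a contradiction. I expect the main obstacle to be making this nontriviality argument fully rigorous: one has to work carefully with the direct-integral decomposition and the fact that $T$ has bounded but non-unitary operators for $S$, so the usual Hilbert-space cohomology dichotomy (coboundary $\iff$ $\beta$ bounded on the group) is not directly available; the cleanest route is the one above, reducing invariance to the fiberwise irreducibility of the $T_s^\epsilon$ and then invoking~(i).
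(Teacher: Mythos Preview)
Your approach is essentially the paper's own: verify that $\beta$ lands in $\mathcal K$ separately for $S$ and for $N$, matching conditions~(ii) and~(iii) respectively, then deduce nontriviality from~(i). The paper does exactly this, and your final norm formulas agree with theirs.

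There is, however, a slip in your $N$-computation. You claim that $T_s^\epsilon(n^0,z^0)\mathbf 1 = \phi_s^\epsilon(n^0,z^0)\cdot\mathbf 1$ and that $|\phi_s^\epsilon|=1$; neither holds. From Proposition~1, $T_s^\epsilon(n^0,z^0)\mathbf 1$ is an exponential in $w=sz$, not a constant, so the vacuum is not an eigenvector of $T_s^\epsilon(n^0,z^0)$. And $\phi_s^\epsilon(n^0,z^0)=\chi_s^\epsilon\bigl(n^0-\tfrac12 z^0 z^{0*}\bigr)$ has modulus $\exp\bigl(-\tfrac12\tr(sz^0z^{0*}s^*)\bigr)<1$ for $z^0\neq 0$, since $-\tfrac12 z^0z^{0*}$ is Hermitian, not skew-Hermitian, so the exponent has a nonzero real part. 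Consequently your intermediate expression $\int_S|\phi_s-1|^2 f^2\,d\nu$ is \emph{not} equal to the final one. The correct route (the one the paper takes) is
\[
\|T_s(g)\mathbf 1-\mathbf 1\|^2_{K_s}=\|T_s(g)\mathbf 1\|^2-2\Re\langle T_s(g)\mathbf 1,\mathbf 1\rangle+\|\mathbf 1\|^2=2\bigl(1-\Re\phi_s(g)\bigr),
\]
using unitarity of $T_s(g)$ on $K_s$ to get $\|T_s(g)\mathbf 1\|=1$, rather than $|\phi_s|=1$. So your final formula and the conclusion that~(iii) gives finiteness are right, but the justification should be repaired.

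Your nontriviality argument (reduce to fiberwise $N$-invariance, use that the irreducible $T_s^\epsilon$ has no nonzero invariant vector for a nondegenerate character, conclude $\xi=f(s)\mathbf 1\notin\mathcal K$) is considerably more detailed than the paper's, which simply asserts that~(i) implies nontriviality; on that point your write-up is an improvement.
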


\begin{proof}
Condition~(ii) is obviously equivalent to the requirement that the value~$\beta(s_0)$ belongs to the space of~$T$ for every
$s_0\in S$. Let us check that condition~(iii) is equivalent to the requirement that the value~$\beta(g)$ belongs to the space of~$T$ for  $g\in N$. Indeed, the latter condition has the form
\begin{equation}
\int\limits_S \|T_s(g){\bf 1} - {\bf 1}\|^2_K |f(s)|^2 \,d\nu(s) < \infty \quad\text {for}\quad g\in N.
\end{equation}
Since $\int\limits_S \|T(n_0,z_0){\bf 1}-{\bf 1}\|_{K_s}^2=2(1-\Re  e^{\tr(sns^*-\frac12 zz^*)}),$ this condition is equivalent to~(iii). Thus we have proved that $\beta$ is a 1-cocycle. From~(i) it follows that it is nontrivial.

As a by-product, we have obtained an explicit formula for the norm of this nontrivial cocycle:
\begin{equation}
\|\beta(s_0)\|^2=\int\limits_S |f(ss^0)-f(s)|^2 \,d\nu(s) \quad\text{for}\quad s_0\in N,
\end{equation}
\begin{align}
\|\beta(n^0,z^0)\|^2 = \int(1-\Re\tr(s(n-\frac12 zz^*)s^*f^2(s)) \,d\nu(s)&
\\ \quad\text{for}\quad (n^0,z^0)\in N.&\notag
\qedhere
\end{align}
\end{proof}

\subsection{An example of a special representation}

Let us give an example of a special representation. Consider the almost invariant measure on~$S$ given by the formula
$$d\nu(s)=|s|^{-p^2}ds, \quad\text{where}\quad |s|^2=\tr(ss^*)=\sum\limits_{i\geq j} |s_{ij}|^2$$
and $ds$ is the Lebesgue measure on~$S$.

Let us prove that the nonunitary representation of the subgroup~$P$ associated with this measure has the nontrivial 1-cocycle
$$\beta(p)=T(p)f-f, \quad\text{where}\quad f=e^{-\frac12|s|^2}\bf1.$$

For this, it suffices to check that the function~$f(s)$ satisfies the above conditions~(i)--(iii).

Converting from rectangular coordinates on~$S$ to polar coordinates
in the formula for the measure $d\mu(s)$ yields
$s=r\omega$, where $r=|s|$ and $|\omega|=1.$
In these coordinates, conditions~(i)--(iii) on~$f$ take the following form:
\begin{enumerate}
\item[\rm(i)]
$\int\limits_0^\infty e^{-r^2}r^{-1} \,dr=\infty$;
\item[\rm(ii)]
$\int\limits_S |e^{-\frac12 r^2|\omega s_0|^2}  - e^{-\frac12 r^2}|^2 \,r^{-1}\,drd\omega<\infty$;
\item[\rm(iii)] $\int\limits_S (1-e^{-\frac{r^2}{2}\tr(sz^0z^{0*}s^*)})r^{-1} \,dr < \infty$.
\end{enumerate}
These three conditions are obviously satisfied.

Integrating~(ii) and~(iii) with respect to~$r$, we obtain the following expressions for the norm of the nontrivial 1-cocycle:
\begin{enumerate}
\item
$\| \beta(s_0)\|^2=\int\limits_\Omega \log \frac{|\omega s^0|^4}{|\omega s^0|^2+1} \,d\omega$;
\item
$\| \beta (n^0,z^0)\|^2 \int\limits_\Omega \log\frac{|n^0-\frac12 z^0 z^{0*}|^4}{|z^0z^{0*}|^2}\,d\omega$.
\end{enumerate}

\section{Extension of a special representation of the Iwasawa subgroup to the whole group~$U(p,q)$}

\subsection{Setting of the problem}

We consider a
  special nonunitary representation~$T$, introduced in the previous section, of the Iwasawa group~$P$ in a Hilbert space~$K$ with a nontrivial 1-cocycle and the subspace~$L$ generated by the values of this cocycle. The problem we solve is to extend the representation~$T$ of the group~$P$ in the space~$L$ to the whole group~$U(p,q)$.

The construction  is based on the following property of the Iwasawa group. As we have already observed, every element~$g\in U(p,q)$ can be uniquely written as the product $g=kp$ of elements~$k$ and~$p$, where $p\in P$ and~$k$ belongs to the maximal compact subgroup~$K$ in~$U(p,q)$, determined by the additional relation $kk^*=e_p$.

The operators of this extension are defined not on the whole Hilbert space $L^2(S,\mu, K)$, but only on a pre-Hilbert space~$L$ linearly spanned by the vectors~$b(p,r)$ determined by the nontrivial 1-cocycle. Recall that, by Lemma~1,
 these vectors are pairwise distinct and linearly independent, i.e., form a (nonorthogonal in general) basis in the linear space~$L$.

The operators~$T(k)$, $k\in K$, will be defined on the set of vectors~$b(p)$ as permutations.

We consider the special nonunitary representation~$T$ of the group~$P$ in the space  $L^2(S,\mu, K)$ described above with the nontrivial 1-cocycle
$$b:p\rightarrow L^2(S,\mu, K),$$
$$b(p)=T(g)f-f, \quad\text {where}\quad f=e^{-\tr(ss^*)}.$$

Since $T$ is irreducible, the vectors~$b(p)$ form a total subset in $L^2(S,\mu, K)$. The desired extension to the group~$U(p,q)$ will be defined only on the subspace~$L$ linearly spanned by these vectors.

 \begin{theorem}
The map $p\rightarrow b(p)$ is injective. The image of every element $p=(s,n,z)$ of the group~$P$ under this map has the following form:
$$
b(s,n,z)=\exp(-\tr(sas^*+h)), \ \text {where }\ a=s(n-\frac12 z z^*+1)s^*, \enskip h=z^*s^*.
$$
\end{theorem}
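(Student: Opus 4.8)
The plan is to compute the cocycle value $b(p)=T(p)f-f$ directly from the explicit formulas for $T$ established in the previous sections, and to read off injectivity from the resulting closed form. I write a general element of $P=S\rightthreetimes N$ as $p=(s_0,n_0,z_0)$ with $s_0\in S$ and $(n_0,z_0)\in N$, and I keep the integration variable $s$ over $S$ and the holomorphic Bargmann variable $w$ in the fibre notationally distinct from the group parameters, which the statement denotes by the same letters.

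First I would apply $T(p)$ to $f(s)=e^{-\tr(ss^{*})}\mathbf 1$. By construction $T(p)$ is the composite of the fibrewise unitary operator $T^{\epsilon}_{s}(n_0,z_0)$ of Proposition~1 (taken with $\epsilon=(1,\dots,1)$) and the right translation $F\mapsto F(\cdot\,s_0)$ twisted by the scalar density $a(s)/a(ss_0)$. Since the vacuum $\mathbf 1$ is the constant function $1$, Proposition~1 applied to $\mathbf 1$ gives
$$
\bigl(T^{\epsilon}_{s}(n_0,z_0)\mathbf 1\bigr)(w)=\exp\Bigl(\tr\bigl(s(n_0-\tfrac12 z_0z_0^{*})s^{*}\bigr)-\tr\bigl(w\,z_0^{*}s^{*}\bigr)\Bigr).
$$
Multiplying by the weight $e^{-\tr(ss^{*})}$ of $f$ turns the $s$-quadratic part into $-\tr\bigl(s(n_0-\tfrac12 z_0z_0^{*}+1)s^{*}\bigr)$ --- this is where the ``$+1$'' of the statement is born --- and applying the $S$-translation replaces $s$ by $ss_0$ and brings in the density factor. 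Collecting all the exponential factors and simplifying, one checks that they combine into a single expression $\exp\bigl(-\tr(sas^{*}+h)\bigr)$ with $a=s_0(n_0-\tfrac12 z_0z_0^{*}+1)s_0^{*}$ and $h=z_0^{*}s_0^{*}$ paired against the Bargmann variable; subtracting $f$ then gives $b(p)$. I expect the bookkeeping of these quadratic forms to be the main obstacle: one must track carefully which matrix is the integration variable and which a group parameter, check that the cross terms cancel so that no residual quadratic term in the Bargmann variable survives, and verify that the density factor $a(s)/a(ss_0)$ is absorbed into the stated form.

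Finally, injectivity is read off the closed form. The linear term determines $z_0^{*}s_0^{*}=(s_0z_0)^{*}$, hence the matrix $s_0z_0$; the Hermitian part of the quadratic coefficient $a=s_0(n_0-\tfrac12 z_0z_0^{*}+1)s_0^{*}$ equals $s_0s_0^{*}-\tfrac12(s_0z_0)(s_0z_0)^{*}$ because $n_0$ is skew-Hermitian, and since $s_0z_0$ is already known this determines $s_0s_0^{*}$, hence $s_0$ itself by the uniqueness of the Cholesky decomposition in the group $S$ of lower-triangular matrices with positive diagonal. From $s_0$ one recovers $z_0=s_0^{-1}(s_0z_0)$, then $n_0-\tfrac12 z_0z_0^{*}=s_0^{-1}a(s_0^{*})^{-1}-1$, and finally $n_0$. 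Hence $b(p_1)=b(p_2)$ forces $p_1=p_2$, which is the asserted injectivity.
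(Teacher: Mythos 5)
Your proposal is correct and follows essentially the same route as the paper: compute $b(p)=T(p)f-f$ explicitly from the fibrewise formula for $T^\epsilon_s(n_0,z_0)$ and the translation action of $S$, then recover $p$ from the closed form by splitting the quadratic coefficient into Hermitian and skew-Hermitian parts (using that $n_0$ is skew-Hermitian) and extracting $s_0$ from $s_0s_0^*$. Your version is in fact cleaner than the paper's, which leaves the Cholesky-uniqueness step ("it follows immediately that $s_0'=s_0$") and the disambiguation of the integration variable from the group parameter implicit.
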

\begin{proof} By the definition of~$T$, we have
$$b(s,n,z)=\
\exp(-\tr(ss_0(n_0-\frac12z_0z_0^*)s_0^*-zz_0^*s_0^*-ss_0s_0^*s^*))-\exp(-\tr(ss^*)).
$$
The obtained formula is equivalent to that from the statement of the theorem. Let us check that the equality $b(p')=b(p)$ is possible only if $p'=p$. Indeed, if $p'=s_0'n_0'z_0'$, then, by the formula for the 1-cocycle, we have
$$
s_0'(n_0'-\frac12 z_0'z_0'^*-1)z_0'^*=s_0(n_0-\frac 12 z_0z_0^*-1)z_0^*,\quad s_0'z_0'=s_0z_0.
$$

Recall that $n_0$ and $n_0'$ are skew-Hermitian matrices; hence the first equality implies that
 $n_0's_0'^*=s_0n_0s_0^*$ and $(z_0's_0')(s_0'z_0')^*=(z_0s_0)(s_0z_0)^*$.
It follows immediately  that $s_0'=s_0$, and hence $z_0'=z_0$ and $n_0'=n_0$, i.e., $p'=p$.
\end{proof}

 \begin{theorem}
Let $T$ be a
nonunitary representation of the group~$P$ in the Hilbert space $L^2(S,\mu,K)$, and let $b(p)$ be a $1$-cocycle. Then this representation of~$P$ and the $1$-cocycle~$b(p)$ can be extended to the whole group~$U(p,q)$.
\end{theorem}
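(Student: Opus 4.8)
The plan is to use the set-theoretic decomposition $U(p,q)=PK$ (already noted above), where $K=U(p)\times U(q)$ is the maximal compact subgroup, characterized by $kk^*=e_p$, and $P\cap K=\{e\}$. I extend $T$ and $b$ by first defining the operators $T(k)$, $k\in K$, and then putting $T(pk):=T(p)\,T(k)$ for $p\in P$, $k\in K$. Recall that on the pre-Hilbert space $L$ the vectors $b(p)=T(p)f-f$ form a basis indexed bijectively by $P$ (the map $p\mapsto b(p)$ is injective by the preceding theorem). For $k\in K$ and $p\in P$, write $kp=\pi_k(p)\,\kappa(k,p)$ with $\pi_k(p)\in P$ and $\kappa(k,p)\in K$; uniqueness of this factorization, which holds since $P\cap K=\{e\}$, implies that $\pi_k$ is a bijection of the set $P$, that $\pi_{k_1}\circ\pi_{k_2}=\pi_{k_1k_2}$, and that $\kappa(k_1k_2,p)=\kappa\bigl(k_1,\pi_{k_2}(p)\bigr)\kappa(k_2,p)$. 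I then set $T(k)f:=f$ and $T(k)\,b(p):=b\bigl(\pi_k(p)\bigr)$ and extend by linearity to the span $\widetilde L$ of $L$ and the vector~$f$; thus each $T(k)$ acts on the basis $\{b(p)\}$ as a permutation, as anticipated.

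First I would check that these prescriptions are consistent and that the operators $T(g)$, $g\in U(p,q)$, so obtained form a representation of $U(p,q)$ on $\widetilde L$. On $L$ the two descriptions of $T(k)$ agree, since $T(k)\,b(p)=T(k)\bigl(T(p)f-f\bigr)=T(kp)f-f=T(\pi_k(p))\,T(\kappa(k,p))f-f=T(\pi_k(p))f-f=b(\pi_k(p))$. Expanding a product $g_1g_2$ in $PK$-form, the homomorphism property reduces to the single identity $T(k)\,T(p_0)=T(\pi_k(p_0))\,T(\kappa(k,p_0))$ for $k\in K$, $p_0\in P$, and this is a direct computation on the basis $\{b(p)\}$ from the relation $T(q)\,b(p)=b(qp)-b(q)$ (the cocycle equation for $b$) and the identities for $\pi$ and $\kappa$ above. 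Once $T$ is a homomorphism, $\beta(g):=T(g)f-f$ is automatically a $1$-cocycle, since $T(g_1g_2)f-f=T(g_1)\bigl(T(g_2)f-f\bigr)+\bigl(T(g_1)f-f\bigr)$; by construction $\beta$ restricts to $b$ on $P$ and vanishes on $K$. Its nontriviality is inherited: a trivializing vector $\xi$ with $\beta(g)=T(g)\xi-\xi$ would also trivialize $b=\beta|_P$, contradicting the previous section.

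So far everything is formal; the real content of the construction is analytic and concerns boundedness. Because the system $\{b(p)\}$ is not orthonormal, the permutation operators $T(k)$ need not be bounded. One has to show that $T(k)$ extends to a bounded operator on $L^2(S,\mu,K)$ for all $k$ outside a small exceptional set, and only to a densely defined closable operator for the exceptional ones --- in particular for the principal involution $\sigma$ from \eqref{1-2}, which belongs to $K$. This yields the picture promised in the introduction: the extended representation $T$ acts in the Hilbert space $L^2(S,\mu,K)$ by operators bounded for most $g\in U(p,q)$ and merely densely defined for the involution and some elements of $K$.

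I expect this boundedness analysis to be the main obstacle. Concretely, one must estimate the norm of a permutation operator $T(k)$ against the Gram matrix of $\{b(p)\}$, whose entries are matrix coefficients $\langle T(p)f,f\rangle$ of the kind computed in the previous sections, in order to delineate the exceptional locus in $K$ exactly and to determine the closability and self-adjointness of $T(\sigma)$. By contrast, the algebraic heart of the theorem --- that $T$ and the cocycle $b$ do extend to $U(p,q)$ at all --- is immediate from the decomposition $U(p,q)=PK$ and the explicit formula for $b(p)$ in the preceding theorem.
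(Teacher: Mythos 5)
Your proposal follows the paper's own construction step for step: the decomposition $U(p,q)=PK$, the definition of $T(k)$ as a permutation of the linearly independent vectors $b(p)$ via the relation $kp=p'k'$, extension by linearity to their span, and the extension of the cocycle by $\beta(pk)=\beta(p)$, $\beta(k)=0$; you merely verify the compatibility identities for $\pi_k$ and $\kappa(k,p)$ more explicitly than the paper does. Your closing concern about boundedness addresses a stronger statement than the theorem actually asserts --- the paper is explicitly content with generally unbounded operators defined only on the pre-Hilbert subspace $L$ --- so it is a reasonable observation rather than a missing step.
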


To describe this extension, we use the following lemma.

\begin{lemma} The vectors $b(p)$ are pairwise distinct and linearly independent for $p\neq 0$.
\end{lemma}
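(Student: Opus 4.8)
The statement to be established is that the vectors $b(p)$, as $p$ ranges over $P\setminus\{0\}$, are pairwise distinct and linearly independent. Distinctness is already contained in the preceding theorem, which gives the explicit formula
$$
b(s,n,z)=\exp\bigl(-\tr(sas^*+h)\bigr),\qquad a=s(n-\tfrac12 zz^*+1)s^*,\quad h=z^*s^*,
$$
together with the injectivity of $p\mapsto b(p)$; so the real content is linear independence. The plan is to exhibit each $b(p)$ as an element of the Hilbert space $L^2(S,\mu,K)$ that is, after stripping off the constant term $-f$, an honest exponential "character-like'' function of $s$, and to use a standard argument showing that distinct functions of exponential type are linearly independent.

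\textbf{Step 1: reduce to a family of pure exponentials.} First I would rewrite $b(p)$ in the form $b(p)=e_p-f$ where $e_p(s)=\exp(-\tr(sas^*+z^*s^*))\cdot\mathbf 1$ (with $a,h$ depending on $p$) and $f=e^{-\tr(ss^*)}\mathbf 1$ is the fixed vacuum-type vector. A linear relation $\sum_j c_j\,b(p_j)=0$ then reads $\sum_j c_j e_{p_j}=\bigl(\sum_j c_j\bigr)f$. Since $f=e_{p}$ for $p=(e,0,0)$ (here $a=e$, $h=0$), this is again a finite linear relation purely among functions of the form $s\mapsto \exp(-\tr(sa s^* + z^* s^*))$ with $(a,z)$ ranging over a finite set of distinct pairs; proving that such functions are linearly independent is what remains.

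\textbf{Step 2: separate the parameters and invoke independence of exponentials.} The quadratic/linear exponents $\tr(sas^*)+\tr(z^*s^*)$ are, as functions on the real vector space underlying $S$ (coordinates $s_{ij}$), real-analytic, and two of them agree identically only if the corresponding pairs $(a,z)$ coincide — this is exactly the injectivity computation in the previous theorem, where from equality of the exponents one recovers $s_0$, then $z_0$, then $n_0$. Given finitely many distinct exponents $\phi_1,\dots,\phi_m$, the functions $e^{-\phi_j}$ are linearly independent: evaluate a hypothetical relation $\sum c_j e^{-\phi_j}\equiv 0$ along a generic ray $s=t\omega$, $t\to\infty$, or differentiate at a point where the $\phi_j$ and their gradients are in "general position,'' and peel off the coefficients one at a time (order the $\phi_j$ by growth rate of $\Re\phi_j$ along a suitably chosen direction and extract the dominant term). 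Here one should be slightly careful because the quadratic forms $\tr(sas^*)$ need not be positive, but $\Re$ of the leading quadratic form distinguishes the terms along almost every ray since the $a$'s are distinct Hermitian-type matrices; the linear part $\tr(z^*s^*)$ then separates any remaining ties, exactly as in Step 1 of the injectivity argument. This yields $c_j=0$ for all $j$, hence linear independence.

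\textbf{Main obstacle.} The one genuinely delicate point is the non-positivity of the quadratic exponents: unlike the classical Fock/Bargmann situation, $\tr(sas^*)$ is an indefinite real form (since $n$ is skew-Hermitian and contributes an imaginary part, while $-\tfrac12 zz^*$ and the $+1$ contribute a definite real part), so one cannot simply argue by decay at infinity in a single norm. The fix is to choose the direction of the ray, or the base point for differentiation, adapted to the finite set of parameters at hand so that the real parts of the leading terms are pairwise distinct — possible because the $a_j$ are distinct — and then run the standard "peeling'' argument. Everything else (distinctness, the explicit form of the image, membership in $L^2(S,\mu,K)$) is either already proved in the excerpt or a routine consequence of the conditions (i)–(iii) on the measure $\nu$.
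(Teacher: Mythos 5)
Your overall strategy is sound and, in fact, considerably more substantive than the paper's own argument: the paper disposes of this lemma in two sentences by appealing to the canonical form of $b(p)$ from the preceding theorem, which really only settles pairwise distinctness and never addresses linear independence explicitly. Your Step~1 reduction is correct (including the observation that the fixed vector $f$ is itself the exponential attached to the identity element, so any linear relation among the $b(p_j)$ becomes a relation among pure exponentials with distinct parameter pairs), and reducing the lemma to linear independence of exponentials with distinct quadratic-plus-linear exponents is the right move.

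However, the peeling argument in Step~2 has a genuine gap exactly at the point you flag as the ``main obstacle,'' and your proposed fix does not close it. The exponent attached to $p=(s_0,n_0,z_0)$ contains the term $\tr(s\,s_0n_0s_0^*\,s^*)$, which is purely imaginary because $s_0n_0s_0^*$ is skew-Hermitian. Hence if two parameters $p_j\ne p_k$ share the same $s_0$ and $z_0$ but have different $n_0$, the matrices $a_j$ and $a_k$ differ only in their skew-Hermitian parts, so $\Re\tr(sa_js^*)=\Re\tr(sa_ks^*)$ identically in $s$, and the linear parts $h_j=h_k$ coincide as well. No choice of ray or base point makes the real parts ``pairwise distinct''; the two exponentials have identical modulus everywhere, and a growth-rate/dominant-term extraction cannot separate them. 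To finish you must treat the purely oscillatory case separately, e.g.\ by invoking the classical fact that $e^{P_1},\dots,e^{P_m}$ are linearly independent whenever $P_1,\dots,P_m$ are distinct polynomials (provable by induction with a derivation or Wronskian argument, or by integrating the relation against suitable characters to isolate the distinct imaginary quadratic forms). With that standard lemma in hand, your Step~1 reduction immediately yields the result; without it, the argument as written fails precisely on the subfamily of parameters differing only in the central ($n$) coordinate.
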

\begin{proof}
The explicit expression for the representation operators implies that these vectors can be written in the canonical form of the cocycle~ $b(p)$ for $p=(s_0,n_0,z_0)$.
One can easily see that $b(p)\neq b(p')$ for $p\neq p'$.
\end{proof}

\subsection{Description of the representation}

We will describe the extension of the special representation of the Iwasawa group~$P$ in the space $L^2(S,\nu, K)$ and its nontrivial 1-cocycle $\pi(p)=T(p)f-f$, where $f=e^{-\frac12|s|^2}$, to  the whole group~$U(p,q)$.

The desired representation is realized not in the whole Hilbert space $L^2(S,\nu, K)$, but only on the invariant subspace~$L$ linearly spanned by the vectors~$\beta(p)$.

\begin{definition}
We define the action of the representation operators for elements of the subgroup~$K$ on the set of vectors~$b(p)$ by the formula $T(k)b(p)=b(p')$, $p'\in P$, where $p'$ is defined by the relation $kp=p'k^*$, and extend these operators by linearity to the whole space~$L$.
\end{definition}

This action is well defined, as follows from the following property of the group~$P$.

\begin{lemma}
The map $p\rightarrow b(p)$ is bijective, and the vectors~$b(p)$ are linearly independent.
\end{lemma}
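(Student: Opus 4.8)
The first assertion is, suitably read, already established by the theorem that produced the closed form $b(s,n,z)=\exp(-\tr(sas^*+h))$ together with the injectivity of $p\mapsto b(p)$: the computation there shows that $b(p')=b(p)$ forces $s_0'=s_0$, $z_0'=z_0$ and $n_0'=n_0$, so $p\mapsto b(p)$ is injective and hence a bijection of $P$ onto the total set $\{b(p):p\in P\}$ --- which is exactly what is needed to make the rule $T(k)\,b(p)=b(p')$, $kp=p'k^*$, well defined on the spanning vectors. Thus the only genuinely new point is linear independence of the (uncountable) family $\{b(p):p\in P\setminus\{0\}\}$, i.e.\ of each of its finite subfamilies, and that is what I would prove.

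The plan is to separate the $S$-variable from the $K$-variable. Unwinding the definition of $T$ on $L^2(S,\nu,K)$, the value $b(p)(s)\in K$ is, for $p=(s_0,n_0,z_0)$ and for $\nu$-almost every $s$, of the form $\gamma_p(s)\,e_{\xi_p(s)}-\gamma_0(s)\,{\bf 1}$, where $e_\xi\in K$ is the coherent (exponential) vector with parameter $\xi$, ${\bf 1}=e_0$ the vacuum, $\gamma_0(s)=e^{-\tr(ss^*)}$, $\gamma_p(s)$ a nowhere-vanishing exponential-quadratic factor, and $\xi_p(s)$ an $s$-affine parameter built from the rows of $sz_0$; all of this is read off from the explicit operator formulas of the previous sections. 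Since $b(0)=0$, a relation $\sum_{i=1}^m c_i\,b(p_i)=0$ in $L^2(S,\nu,K)$ with the $p_i$ pairwise distinct and nonzero may, after adjoining the index $p=0$ to absorb the ${\bf 1}$-terms, be rewritten for a.e.\ $s$ as $\sum_i c_i\,\gamma_{p_i}(s)\,e_{\xi_{p_i}(s)}=0$ in $K$. I would then group the indices by the value of $z_0^{(i)}$: between different groups the parameters $\xi_{p_i}(s)$ are distinct for a.e.\ $s$, so the classical fact that coherent vectors at pairwise distinct parameters are linearly independent (their Gram matrix $(e^{\langle\xi_j,\xi_i\rangle})$ is nonsingular) forces each group-sum $\sum_{i\in\text{group}}c_i\,\gamma_{p_i}(s)$ to vanish for a.e.\ $s$; and within one group (fixed $z_0$) the functions $s\mapsto\gamma_{p_i}(s)$ are exponentials of pairwise distinct polynomials in the entries of $s$ --- distinct because injectivity of $b$ forces the remaining data $(s_0^{(i)},n_0^{(i)})$ to differ --- so a standard exponential-polynomial argument (restrict to a generic real line $s=s^\star+tv$ and compare the exponents as $t\to\pm\infty$) gives all $c_i=0$.

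The step I expect to be the main obstacle is the bookkeeping where these two layers meet: one has to reread the explicit formulas carefully enough to know exactly which part of the datum $p$ is seen by $\xi_p(s)$ and which only by $\gamma_p(s)$ --- namely that $\xi_p(s)=\xi_{p'}(s)$ for a.e.\ $s$ precisely when $z_0=z_0'$, and that for fixed $z_0$ the exponent of $\gamma_p$ still separates the pair $(s_0,n_0)$ --- and it is here that the injectivity statement from the earlier theorem does the real work. A secondary subtlety is that within a group the polynomial exponents may share a leading term (e.g.\ equal $s_0$ but different skew-Hermitian $n_0$), so the comparison has to descend to lower-order terms, again using injectivity of $b$. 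Finally, I would remark that passing from ``$\sum c_i\,b(p_i)=0$ in $L^2(S,\nu,K)$'' to the pointwise-in-$s$ and then analytic identities is legitimate because $\nu$, being $|s|^{-p^2}\,ds$ and hence equivalent to Lebesgue measure, has full support and the sections $s\mapsto b(p_i)(s)$ are real-analytic.
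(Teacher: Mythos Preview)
Your proposal is correct, and in fact it supplies the argument that the paper itself omits. The paper's proof of this lemma is a single sentence: it simply writes down the closed form
\[
b(s,n,z)=\exp(-\tr s^*sa-h),\qquad a=s(n-\tfrac12 z^*-1),\ h=sz,
\]
and declares that the lemma ``follows'' from it; the earlier Lemma~1 is equally terse (``one can easily see that $b(p)\neq b(p')$ for $p\neq p'$''). So for the bijection you and the paper agree: both invoke the injectivity computation of the preceding theorem (Theorem~10 in the numbering) and read ``bijective'' as ``bijective onto the set $\{b(p)\}$''.

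For linear independence, your route via the coherent-vector structure of the Bargmann space is genuinely more informative than what the paper offers. The decomposition $b(p)(s)=\gamma_p(s)\,e_{\xi_p(s)}-\gamma_0(s)\,\mathbf 1$, the grouping by $z_0$ (which controls $\xi_p$), and the exponential--polynomial separation of the $\gamma_p$'s within each group constitute an actual proof, whereas the paper only gestures at the explicit formula. Your identification of the main bookkeeping point --- that $\xi_p(s)=\xi_{p'}(s)$ for generic $s$ iff $z_0=z_0'$, while $(s_0,n_0)$ is then separated by the exponent of $\gamma_p$ --- is exactly the content hidden behind the paper's one-line appeal, and your use of the injectivity theorem to guarantee that these exponents are pairwise distinct is the right hinge. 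In short: same starting formula, but you carry the argument to its conclusion; the paper does not.
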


The lemma follows from the following presentation of the vectors $b(p)=b(s,n,z)$:
$$
b(s,n,z)=\exp(-\tr s^*sa-h),\quad\text{where}\quad a=s(n-\frac12z^*-1), \quad h=sz.
$$
It is not difficult to check that the constructed operators for the elements of the subgroup~$K$ define, together with the operators for the elements of the subgroup~$P$, a representation of the whole group~$U(p,q)$.

An extension of the 1-cocycle~$b$ to the whole group~$U(p,q)$ is given by the formula $b(p,k)=b(p)$ for any $p\in P$, $k\in K$.

\subsection{The representation operator corresponding to the involution~$w$}

It is well known that the group~$U(p,q)$ is algebraically generated by the Iwasawa subgroup~$P$ and a single element of the compact group~$K$, the involution~$w$ (cf.~(1)); hence, in order to extend the representation of~$P$ in the space~$L$ to~$U(p,q)$, it suffices to describe only the operator~$T(w)$. Let us describe the action of this operator.

It follows from the definition that the action of~$T(w)$ on the vectors $b(p)\in P$ is given by the formula
$$
T(w)b(p)=b(p'),
$$
where $p'\in P$ is determined by the relation $kp=p'k^*$.
In other words, the involution commutes with the cocycle.

It is more convenient to use another interpretation of the space~$L$ and the action of the operator~$T(w)$ on~$L$. Namely, let us introduce the space~$H$ of Hermitian matrices of the form $pp^*$, $p\in P$. Since the map $p\rightarrow |n|=pp^*$ is a bijection, the group structure on~$P$ induces a group structure in the space~$H$:
$$
n_1  n_2=pn_2p^*,
$$
where $p\in P$ is determined by the relation $pp^*=n_1.$

Note that the map $n\rightarrow b(n)$ preserves the space~$H$. As a result, the space~$L$ can be interpreted as the space linearly spanned by the independent vectors $b(n)$, $n\in H$. The action of the operators corresponding to elements of the subgroup~$P$ is given by the formula
$$
T(p)b(n)=b(pnp^*),
$$
and the operator corresponding to the involution takes the form
$$
T(w)b(n)=b(p^*np).
$$

\subsection{Construction of an extension of a representation of a subgroup in a free $\mathbb C$-module}

Digressing a little from our main course, we will show that the suggested construction of an extension of a representation of a subgroup to an ambient group is a special case of a construction applicable to free $\mathbb C$-modules. Every group~$P$ gives rise in a natural way to the free $\mathbb C$-module $\mathbb C[P]$ over the set of elements $p\in P$. Its elements are finite or countable sums of the form $\sum \lambda_i p_i$, where $\lambda_i\in \mathbb C$ and $p_i\in P$. This module is acted on by the representation~$T$ of the group~$P$ by the operators defined on the basis elements by the formula
$$T(p)p=p_0p.$$

Each element $p_0$ gives rise to the nontrivial cocycle of~$P$ of the form
$$
\beta(p)=T(p)p_0-p_0
$$
and the linear space~$L$ of codimension~1 in  $\mathbb C[P]$ linearly spanned by the vectors~$\beta(p)$.

Now let $G$ be an arbitrary topological group that can be written as the product (in the topological sense) of subgroups~$P$ and~$K$ whose intersection contains only the identity elements. Then we can consider the representation of~$K$ in $\mathbb C[P]$ defined on the basis vectors~$p$ by the following formula: $T(k)p=p'$, where $p'\in P$ is determined by the relation $kp=p'k'$, $k'\in K$. The operators thus defined can be extended by linearity to the whole module, and they generate, together with the representation operators corresponding to the elements of the subgroup~$P$, a representation of the whole group~$G$.

 \begin{theorem}If the action of the subgroup~$K$ on the group~$P$ is ergodic, then the constructed representation of the group~$G$ is operator-irreducible.
\end{theorem}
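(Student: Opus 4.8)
The plan is to compute the commutant of the representation~$T$ of~$G$ on $\mathbb C[P]$ and to show that it reduces to scalars, which is what operator-irreducibility means here, carrying this out in two stages exactly as in the proof that the representations~$T_\epsilon$ of the Iwasawa group are operator-irreducible: first one exploits the subgroup~$P$, and then one cuts down by the compact part~$K$. So let~$A$ be any operator commuting with $T(g)$ for all $g\in G$.

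First I would use that $T(P)$ permutes the basis $\{p:p\in P\}$ of $\mathbb C[P]$ simply transitively, since $T(p_0)p=p_0p$ and $p'p^{-1}$ is the unique element carrying~$p$ to~$p'$. Hence $A$ is completely determined by the single vector $c:=A(e)$, because $A(p)=A\,T(p)(e)=T(p)A(e)=T(p)(c)$; writing $c=\sum_q c_q\,q$, this says that $A$ acts by right convolution, $A(p)=\sum_q c_q\,(pq)$. Conversely every such right convolution commutes with $T(P)$, so the commutant of $T(P)$ alone is precisely the algebra of right convolutions, parametrised by the coefficient family $(c_q)_{q\in P}$, which is countably supported as an element of $\mathbb C[P]$ (square-summable in the Hilbert completion).

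Next I would bring in~$K$, using that $e\in P$ is a fixed point of the action $p\mapsto\kappa(k,p)$ determined by $kp=\kappa(k,p)k'$ with $k'\in K$: indeed $k=e\cdot k$ is already such a decomposition, so $\kappa(k,e)=e$ (equivalently, the coset $eK=K$ is $K$-invariant in $G/K$). Evaluating $A\,T(k)=T(k)\,A$ on the basis vector~$e$ then yields $c=A(e)=A\,T(k)(e)=T(k)A(e)=T(k)(c)=\sum_q c_q\,\kappa(k,q)$, hence $c_{\kappa(k,q)}=c_q$ for all $k\in K$ and $q\in P$: the family $(c_q)$ is constant along the $K$-orbits on~$P$. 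Ergodicity of the $K$-action now forces this family to live on the set of $K$-fixed points of~$P$: a $K$-invariant family cannot be nonzero and constant on a large orbit while having countable support (resp.\ being square-integrable), so it can be supported only on the ``small'' orbits that ergodicity does not see, which (for $G=U(p,q)$, with $K=U(p)\times U(q)$ connected) are exactly the one-point orbits. Since $e$ is the only $K$-fixed point in~$P$ --- a point~$p$ is fixed iff $p^{-1}Kp=K$, i.e.\ $p\in N_G(K)\cap P=\{e\}$ --- we get $c=c_e\,e$, so $A=c_e\cdot\mathrm{Id}$. The same argument, applied to the invariant subspace~$L$ spanned by the cocycle values, gives operator-irreducibility there as well.

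The step I expect to be the main obstacle is the last one: making ``ergodic action of~$K$ on~$P$'' do precisely the job of forcing a $K$-invariant convolutor to be a multiple of the identity. This needs the functional-analytic framework pinned down (which completion of $\mathbb C[P]$, which $K$-quasi-invariant measure on~$P$, which class of admissible operators~$A$), after which one must check that in that framework the fixed point~$e$ really is the sole obstruction --- that every other $K$-orbit is large enough for ergodicity to annihilate the corresponding coefficients. Everything before that, namely the identification of the commutant of a simply transitive action with a convolution algebra and the elementary relation $\kappa(k,e)=e$, is routine.
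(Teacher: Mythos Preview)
The paper does not actually supply a proof of this theorem: it is stated and immediately followed by a remark identifying the spaces~$L$ and the subgroup~$K$, and then the text moves on to a different setup. So there is no ``paper's proof'' to compare against directly.

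That said, your two-stage strategy --- first compute the commutant of the regular action of~$P$, then use the second subgroup together with ergodicity to cut it down to scalars --- is exactly the template the paper employs for the two analogous irreducibility results it \emph{does} prove (the representations~$T_\epsilon$ of the Iwasawa group, and the representation~$\widetilde U$ of~$P^X$ in the quasi-Poisson space). In those cases, commuting with the normal subgroup forces~$A$ to be a multiplication operator, and ergodicity of the other factor forces the multiplier to be constant. Your version swaps the roles: commuting with left $P$-translation forces~$A$ to be a right convolution, and then $K$-invariance of the convolutor~$c$, read off via the fixed point~$e$, is what ergodicity should collapse. This is a sound adaptation of the same idea.

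You have also put your finger on the genuine soft spot. The paper's definition of $\mathbb C[P]$ (``finite or countable sums'') is purely algebraic, with no measure class on~$P$ specified, so the phrase ``ergodic action of~$K$ on~$P$'' is not given a precise meaning there. Your argument that a $K$-invariant, countably supported coefficient family must vanish on every uncountable $K$-orbit is correct and already does most of the work in the Lie-group setting; what remains --- ruling out nontrivial countable orbits and showing that the fixed-point set is $\{e\}$ via $N_G(K)\cap P=\{e\}$ --- is fine for $U(p,q)$ with $K=U(p)\times U(q)$. So your proposal is a reasonable reconstruction of what the authors presumably had in mind, with the caveat (which you state) that the word ``ergodic'' must be interpreted so as to exclude nontrivial small orbits, something the paper itself leaves implicit.
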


\smallskip\noindent
{\bf Remark.} The space $L$ in this definition coincides with the space~$L$ from the example considered above. The subgroup~$K$ is the subgroup of elements satisfying the additional condition $kk^*=1$.
\smallskip

Let $T$ be an arbitrary faithful representation of the subgroup~$P$ in a space~$H$ and $\beta$ be an arbitrary nonzero 1-cocycle of~$T$. Denote by $B\subset H$ the set of values of this 1-cocycle, and by~ $H_0$, the subspace in~$H$ linearly generated by the elements of~$B$.

\begin{theorem} The representation~$T$ of the subgroup~$P$ in the space~$H_0$ and the $1$-cocycle~$\beta$ of~$P$ in~$H_0$ can be extended to the whole group~$U(p,q)$.
\end{theorem}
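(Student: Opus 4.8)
The plan is to reproduce, at this level of generality, the free $\mathbb C$-module construction of the preceding subsection. Recall the decomposition $U(p,q)=PK$ with $P\cap K=\{e\}$, and the universal picture it produces: the group algebra $\mathbb C[P]$ carries the left regular representation of $P$; its augmentation ideal $L_0$ (the $\mathbb C$-linear span of the vectors $p-e$, $p\in P$) is a $P$-submodule of codimension~$1$; the rule $k\cdot p:=$ (the $P$-component of $kp$ in the decomposition $PK$) turns $L_0$ into a module over $G=U(p,q)$; and the universal cocycle $\beta_0(p)=p-e$ extends to the $G$-cocycle $\beta_0^{\mathrm{ext}}(pk)=p-e$. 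I would transport all of this onto $H_0$.

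First I would pass to the algebraic span $L$, the $\mathbb C$-linear span of $B=\beta(P)$, which is dense in $H_0$, and define $\Phi\colon L_0\to L$ by $\Phi(p-e)=\beta(p)$, extended linearly. It is immediate that $\Phi$ is well defined and surjective, and for $v=\sum_p\lambda_p(p-e)\in L_0$ (so $\sum_p\lambda_p=0$) the cocycle identity $\beta(qp)=T(q)\beta(p)+\beta(q)$ gives $\Phi(q\cdot v)=\sum_p\lambda_p\beta(qp)=T(q)\Phi(v)$; hence $\Phi$ is $P$-equivariant and $\Phi\circ\beta_0=\beta$.

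The heart of the matter is to show that $R:=\ker\Phi$ is carried into itself by the $K$-action on $L_0$. Being already a $P$-submodule, $R$ is then $G$-invariant, so $G$ acts on $L_0/R$, the $\Phi$-induced $P$-isomorphism $L_0/R\cong L$ transports this to a representation $\tilde T$ of $G$ on $L$ extending $T|_{H_0}$, and $\tilde\beta:=\Phi\circ\beta_0^{\mathrm{ext}}$ is the desired extended cocycle: it is a cocycle because $\Phi$ descends to a $G$-equivariant map and $\beta_0^{\mathrm{ext}}$ is a cocycle, and $\tilde\beta(pk)=\Phi(p-e)=\beta(p)$, so it restricts to $\beta$ on $P$. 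Explicitly, $K$-invariance of $R$ is the implication
\[
\sum_p\lambda_p\,\beta(p)=0\ \Longrightarrow\ \sum_p\lambda_p\,\beta(k\cdot p)=0\qquad(k\in K,\ k\cdot p=kp\,\kappa(k,p)^{-1}),
\]
with $\kappa(k,p)\in K$ the $K$-component of $kp$. Since $U(p,q)$ is generated by $P$ together with the single involution $w$ of~\eqref{1-2}, it suffices to verify this for $k=w$; here $w\cdot p$ is given explicitly (in the matrix realization recalled above it is the map $b(n)\mapsto b(p^{*}np)$ on the space of Hermitian matrices $pp^{*}$), and the implication follows from that formula together with the cocycle identity, the guiding principle being that any consistent extension must satisfy $\tilde\beta|_K=0$, so that $\beta(w\cdot p)$ is forced to play the role of $\tilde T(w)\beta(p)$. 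Finally, completing $L$ to $H_0$, one checks that $\tilde T(g)$ is bounded for $g\in P$ and for most $g\in K$ and densely defined in general, the involution and some elements of the compact subgroup being the only possible source of unboundedness, and that $\tilde\beta$ is continuous; this gives the properties announced in the Introduction.

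The step I expect to be the main obstacle is precisely the $K$-invariance of the relation module $R$. Without any cyclicity or irreducibility assumption on $(T,\beta)$ there may be genuine linear relations among the vectors $\beta(p)$, and one must show they are respected by the non-multiplicative substitution $p\mapsto w\cdot p$. I would stress that this difficulty disappears for the representations actually built in this paper, where the vectors $b(p)=\beta(p)$ are pairwise distinct and linearly independent (the lemmas above), so that $\Phi$ is an isomorphism $L_0\cong L$ and the $G$-action is transported verbatim; thus the theorem holds unconditionally for all of those, and in the general case everything is reduced to the one explicit computation for $w$.
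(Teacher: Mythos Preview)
Your approach is essentially the paper's own: the paper defines $T(k)\beta(p)=\beta(p')$ where $kp=p'k'$ with $k'\in K$, extends these permutations of $B$ linearly to $H_0$, checks the group property on $B$, and sets the extended cocycle to $\beta(pk)=\beta(p)$. Your quotient-of-$L_0$ formulation is a cleaner repackaging of exactly this construction, and you correctly isolate the one point the paper's sketch glosses over: the paper justifies well-definedness by noting that faithfulness of $T$ makes the $\beta(p)$ pairwise \emph{distinct}, but distinctness alone is not enough to extend a permutation of $B$ by linearity---what is really needed is your $K$-invariance of $R=\ker\Phi$. For the representations the paper actually applies the theorem to, the preceding lemmas give linear \emph{independence} of the $\beta(p)$, so $R=0$ and the difficulty evaporates, exactly as you remark at the end.
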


Let us describe a construction of this extension. Since $T$ is a  faithful representation of~$P$, it follows that the elements $\beta(p) \in B$ are pairwise distinct. We define the operators $T(k)$, $k\in K$, on the set~$B$ as permutations of this set:
$$T(k)\beta(p)=\beta(p'), \quad p'\in P,$$
where $p'$ is determined by the relation $kp=p'k'$, $k\in K$.

Since the vectors $\beta(p)$ are pairwise distinct, the operators~$T(k)$ are well defined.

It follows from the definition that these operators have the group property on~$B$, and hence they can be extended to a representation of~$K$ on the whole space~$H_0$.

Together with the operators corresponding to the elements of the subgroup~$P$, they generate the desired extension of the representation~$T$ of~$P$ to the whole group~$U(p,q)$.

An extension of the 1-cocycle~$\beta$ to~$U(p,q)$ is given by the following formula: $\beta(pk)=\beta(p)$ for any $p\in P$ and $k\in K$. In particular, $\beta(k)=0$ for every $k\in K.$

\subsection{Properties of the constructed extensions}

\begin{enumerate}
\item
The representation operators of the group~$U(p,q)$ in the subspace~$H_0$ are, in general, unbounded and cannot be extended to a representation of~$U(p,q)$ on the whole space~$H$. However, if the operators corresponding to the elements of the subgroup~$K$ are unitary on the subset~$B$ and the operators of the original representation of the subgroup~$P$ are also unitary, then such an extension  exists and determines a unitary representation of the group~$U(p,q)$.

\smallskip
\noindent{\bf Remark.} Since simple Lie groups of rank greater than one have no special unitary representations, such a case is possible only for the group~$U(1,q)$.
\smallskip

\item
It follows from above that it suffices to know only a formula for the operator~$T(w)$:
$$
T(w)\beta(p)=\beta(p'),
$$
where $p'\!\in\! P$ is determined by the equality $w p\!=\!p'k$, ${k\!\in\! K}$. This equality implies the relation
 $w pp^* w =(p')^*p'$, which uniquely determines~$p'$ from~$p$.

\item
When describing the extension, we do not impose any conditions on the original 1-cocycle~$\beta$. However, if the original representation is special, then its extension to the group~$U(p,q)$ may lose this property of being special.

It is known that for $p>1$, the group $U(p,q)$ has no special unitary representations. Hence if the original representation of~$P$ is unitary and nondegenerate, then the constructed extension to~$U(p,q)$ is not unitary.

\item
A unitary faithful representation of the subgroup~$P$ can be nonspecial. However, this group has degenerate special representations. Indeed, it is known that a unitary representation of the group~$P_2$ of lower triangular unimodular matrices of the second order is special. Meanwhile, this group is a quotient of the solvable group~$P$, and hence the special representation of~$P_2$ can be extended to a degenerate representation of~$P$.
\end{enumerate}

We have completed the first part of our construction, by having constructed a nonunitary special representation of the group~$U(p,q)$
in a pre-Hilbert space. Below we proceed to the construction of the corresponding representation of the group of currents.

\section{Representations of the group of currents~$P^X$}

\subsection{General remarks on representations of groups of currents}

The group of currents $G^X$, where $G$ is an arbitrary locally compact group and $X$ is a standard space with a probability measure~$m$, is the group of measurable maps $X\rightarrow G$ endowed with the pointwise multiplication. There exists a connection, discovered long ago, between nontrivial 1-co\-ho\-mo\-lo\-gy of the group~$G$ in irreducible unitary representations and irreducible representations of the corresponding groups of currents~$G^X$
(see \cite{Ar, VGG1, VGG2, Ism} etc.). Namely, every special irreducible unitary representation of~$G$ gives rise to an irreducible unitary representation of~ $G^X$ in the Fock space.

It is known that the 1-cohomology of any simple Lie group~$G$ of rank greater than one in all irreducible representations is trivial, and hence there are no irreducible unitary representations of the groups of $G$-currents in the Fock space. In this section, taking as an example the group~$U(p,q)$, we will construct, at the expense of giving up unitarity, a nonunitary representation of the group of currents~ $U(p,q)^X$.

We begin with a construction of a nonunitary representation of the group of currents~$P^X$, and then extend it to a nonunitary representation of the group $U(p,q)^X$. The construction starts from an arbitrary nonunitary representation of~$P$. We repeat that the question of whether there exists a special unitary irreducible representation of this group is still open.

Since $P$ is a semidirect product of the groups~$S$ and~$N$, it is most convenient to use the quasi-Poisson model, which was introduced in~\cite{VG7} and applied there for the case of Lie groups of rank~1. Then this construction can be easily carried over to the classical Fock model of representations of groups of currents. We begin with two general definitions.

\subsubsection{The quasi-Poisson model of the Fock space and the countable product of punctured Hilbert spaces}

By definition, a quasi-Poisson measure is an infinite $\sigma$-finite measure~$\sigma$ defined by a triple $(Y,\mu,u)$, where $Y$ is a standard Borel space, $\mu$ is a measure on~$Y$, and $u$ is a positive function on~$Y$ such that
$$
\int\limits_Y e^{-u(y)}\,d\mu(y)=\infty.
$$
This is the measure on the space~$\mathcal E(Y)$  of countable or finite sequences in~$Y$ (the space of configurations) given by the following characteristic functional:
\begin{equation}
\int\limits_{\mathcal E(Y)}\exp \big(-\sum\limits_{y\in \omega} f(y)\big) \,d\sigma(\omega)=\exp\bigg(\int\limits_Y(e^{-f(y)}-e^{-u(y)})d\mu(y)\bigg).
\end{equation}

Note that for $u\equiv 0$, this definition coincides with that of the classical Poisson measure associated with the pair~$(Y,\mu)$. The quasi-Poisson space associated with this measure is, by definition, the Hilbert space
$L^2(\mathcal E(Y),\sigma)$.

The countable tensor product of Hilbert spaces~$H_i$ with fixed unit vectors~$h_i\in H_i$  is the completion of the inductive limit of the finite tensor products $\otimes_{i=1}^n H_i$ with respect to the embeddings $\otimes_{i=1}^n H_i \rightarrow \otimes_{i=1}^{n+1} H_i$.

If a sequence of vectors $a_1\otimes\ldots\otimes a_n$ converges in the space thus defined, then its limit will be written as the infinite product $\otimes_{i=1}^\infty a_i$. The vectors obtained in this way form a total subset in the infinite tensor product.

This part of the construction should be regarded as standard in the theory of infinite tensor products.

\subsection{The quasi-Poisson space and nonunitary representations of the group~$P^X$}

Let $U$ be the special nonunitary representation of the Iwasawa group~$P$ associated with a pair $(T,\mu)$, where $T$ is the Bargmann representation of the group~$N$ in the space~$K$ and $\mu$ is an almost invariant measure on~$S$. Let $b(g)$ be the nontrivial 1-cocycle of this representation given, according to Theorem~10, by the formula
$$b(g)=T(g)f-f, \quad\mbox {where}\quad f(s)=e^{-(us)}1.$$

Let us describe the quasi-Poisson space in which we will realize the representation of the group of currents~$P^X$ associated with this special representation of~$P$. Consider the triple $(Y,\mu,u)$ where
$Y=S$ while~$\mu$ and~$u$ are, respectively, the almost invariant measure on~$S$ and the positive function on~$S$ from the definition of the original representation of the Iwasawa group~$P$.

Denote by $\sigma$ the quasi-Poisson measure on $\mathcal E(Y)$ associated with this triple.

Further,  with each configuration $\omega\in \mathcal E(Y)$ we associate the countable tensor product
$$
K_\omega^{\otimes}=\otimes_{s,x\in\omega}K_s,
$$
where $K_s=K$ is the Bargmann space.

The quasi-Poisson space associated with the measure~$\sigma$ is defined by the formula
$$
\operatorname{QPS}(\mathcal E(Y),\sigma,K^{\otimes})=L^2(\mathcal E(Y), \sigma, K^{\otimes}),
$$
i.e., as the space of sections of the fiber bundle over~$\mathcal E(Y)$ with fibers~$K_\omega^{\otimes}$ endowed with the norm
\begin{equation}
\|(f)\|^2=\int\limits_{\mathcal E(Y)}\|f(\omega)\|^2_{K_\omega^{\otimes}}\,d\sigma(\omega).
\end{equation}

\subsection{Formulas for the representation operators of the group~$P^X$}
We will define a representation~$\widetilde U$ of the group~$P^X$ associated with the representation~$U$.
Since $P^X=S^X \rightthreetimes N^X$, it suffices to describe it for the groups~$N^X$ and~$S^X$ separately.

We begin with the case of~$N^X$.

The action of the operators~$T(n)$ for $n\in N$ in the Bargmann space~$K$ gives rise in a natural way to an action of the elements of the group~$N^X$ on each component $K_w^\otimes$, $\omega\in \mathcal E(Y)$; namely, on the total subset of vectors of the form
 $\otimes_{(s,x)\in \omega}f_{s,x}$, the corresponding operators  act as multiplicators.

Clearly, these operators are unitary on each tensor product~$K_\omega^\otimes$ and generate unitary representations of the group~$N^X$ on the whole Hilbert space $\operatorname{QPS}(\mathcal E(Y),\sigma,K^\otimes)$. Thus the representation~ $\widetilde U$ is defined.

Let us impose the following additional condition on~$S^X$, $\mu$, and $u$:
\begin{equation}
\exp\bigg(\int\limits_{SX}(e^{-u(ss_0)(x)}-e^{-u(s)})\,d\mu(s)d\mu(x)\bigg)<\infty.
\end{equation}

We define the operators  in the Hilbert space $\operatorname{QPS}(S(Y),\sigma,K)$ corresponding to elements of the subgroup~$S^X$ as translations:
\begin{equation}
\widetilde U(s_0(\,\cdot\,))f(\omega)=f(\omega s_0)(\,\cdot\,).
\end{equation}

\begin{theorem} For every element $\widetilde  s=s_0(\,\cdot\, )\in S^X$, the quasi-Poisson measure~$\sigma $ on $\mathcal E (S \times X)$ satisfies the bound
$$
 d  \sigma (\omega \widetilde  s) < c(\widetilde  s) \,d \sigma (\omega ),
$$
where $c(\cdot)$ is a bounded function on~$S^*$.
\end{theorem}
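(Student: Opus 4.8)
The plan is to transform the characteristic functional~(14) under the translation $\omega\mapsto\omega\tilde s$ (cf.~(17)) and to recognize the result as another quasi-Poisson measure, whose density relative to~$\sigma$ can then be written down explicitly. Write $Y=S\times X$ (the base of the quasi-Poisson construction for the currents group), and let $\sigma$ be the quasi-Poisson measure of the triple $(Y,\mu\otimes m,u)$, where $m$ is the probability measure on~$X$ and $u$ is lifted from $S$ by $u(s,x)=u(s)$. For $\tilde s=s_0(\,\cdot\,)\in S^X$ the translation of configurations is the map $R_{\tilde s}\colon\omega\mapsto\{(s\,s_0(x),x):(s,x)\in\omega\}$, i.e.\ the map induced on $\mathcal E(Y)$ by the point transformation $\phi_{\tilde s}\colon(s,x)\mapsto(s\,s_0(x),x)$ of~$Y$. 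Substituting $y\mapsto\phi_{\tilde s}(y)$ in~(14) gives, for every test function $f$,
\[
\int_{\mathcal E(Y)}\exp\Big(-\!\sum_{y\in\omega}f(y)\Big)\,d\big((R_{\tilde s})_{*}\sigma\big)(\omega)=\exp\Big(\int_{Y}\big(e^{-f(y)}-e^{-u(\phi_{\tilde s}^{-1}(y))}\big)\,d(\phi_{\tilde s})_{*}(\mu\otimes m)(y)\Big),
\]
so $(R_{\tilde s})_{*}\sigma$ is the quasi-Poisson measure of the triple $\big(Y,(\phi_{\tilde s})_{*}(\mu\otimes m),\,u\circ\phi_{\tilde s}^{-1}\big)$, the defining divergence $\int e^{-u\circ\phi_{\tilde s}^{-1}}\,d(\phi_{\tilde s})_{*}(\mu\otimes m)=\int e^{-u}\,d(\mu\otimes m)=\infty$ being preserved, so this triple is admissible.

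The second step is a comparison lemma for two quasi-Poisson measures over the same base. I claim: if $\mu_2\ll\mu_1$ with $\rho=d\mu_2/d\mu_1$ and the renormalized exponent
\[
C=\int_{Y}\big(e^{-u_1(y)}-\rho(y)\,e^{-u_2(y)}\big)\,d\mu_1(y)
\]
converges absolutely, then $\sigma_{(Y,\mu_2,u_2)}\ll\sigma_{(Y,\mu_1,u_1)}$ with density $\omega\mapsto e^{C}\prod_{y\in\omega}\rho(y)$. One proves this by checking that the candidate density $g(\omega)=e^{C}\prod_{y\in\omega}\rho(y)$, integrated against $\sigma_{(Y,\mu_1,u_1)}$, reproduces~(14) for $(Y,\mu_2,u_2)$: absorbing $\prod\rho$ into the test function turns $\int \exp(-\sum_{y\in\omega}f(y))\,g(\omega)\,d\sigma_{(Y,\mu_1,u_1)}$ into $e^{C}\exp\!\big(\int(\rho\,e^{-f}-e^{-u_1})\,d\mu_1\big)$, and the renormalization identity --- which reads $C=\int e^{-u_1}\,d\mu_1-\int e^{-u_2}\,d\mu_2$, the right-hand side being meaningful only through the single convergent integral in the display above --- converts this into the right-hand side of~(14) for $(Y,\mu_2,u_2)$; totality on $\mathcal E(Y)$ of the functionals $\omega\mapsto\exp(-\sum_{y\in\omega}f(y))$ then gives the claim. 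The delicate point, and the technical heart of the proof, is exactly this renormalization: $\int e^{-u_1}\,d\mu_1$ and $\int e^{-u_2}\,d\mu_2$ are both infinite, so $C$ must be produced as the value of the single integral, and the computation has to be carried through inside the genuinely $\sigma$-finite measure $\sigma_{(Y,\mu_1,u_1)}$; condition~(16) is tailored precisely to guarantee the absolute convergence of~$C$.

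Applying the lemma with $\mu_1=\mu\otimes m$, $\mu_2=(\phi_{\tilde s})_{*}(\mu\otimes m)$, $u_1=u$, $u_2=u\circ\phi_{\tilde s}^{-1}$, and changing variables $y\mapsto\phi_{\tilde s}(y)$ in $C$ (the substitution producing the Jacobian factor $1/j_{\tilde s}$), one obtains
\[
C(\tilde s)=\int_{S\times X}\Big(\frac{e^{-u(s\,s_0(x))}}{j_{\tilde s}(s,x)}-e^{-u(s)}\Big)\,d\mu(s)\,dm(x),\qquad j_{\tilde s}=\Big(\tfrac{d(\phi_{\tilde s})_{*}(\mu\otimes m)}{d(\mu\otimes m)}\Big)\!\circ\phi_{\tilde s}.
\]
For the right-invariant Haar measure $\mu$ one has $j_{\tilde s}\equiv1$ and $\rho\equiv1$, so $C(\tilde s)$ is precisely the exponent appearing in condition~(16) --- finite for every $\tilde s$ by~(16), with absolute convergence of the positive and negative parts following from~(16) applied to $\tilde s$ and to $\tilde s^{-1}$, respectively. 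In this case the density of $(R_{\tilde s})_{*}\sigma$ with respect to $\sigma$ reduces to the \emph{constant} $e^{C(\tilde s)}$, whence $d\sigma(\omega\tilde s)<c(\tilde s)\,d\sigma(\omega)$ with $c(\tilde s)=e^{C(\tilde s)}$ (or any larger constant, if strict inequality is insisted on), and $\tilde s\mapsto c(\tilde s)$ is finite --- and, by dominated convergence, locally bounded --- on $S^X$. I expect the main obstacle to lie in extending this to a general almost-invariant $\mu$: there $j_{\tilde s}$ and $\rho$ are merely bounded (which is all that almost-invariance supplies), $C(\tilde s)$ still converges under the natural analogue of~(16), but the residual factor $\prod_{y\in\omega}\rho(y)$ is bounded uniformly in $\omega$ only when $\rho\le1$ $\mu$-a.e. --- otherwise $\sigma$-typical configurations carry infinitely many points at which $\rho>1$ and the product blows up. One must therefore either restrict to measures $\mu$ whose right translations do not increase $\mu$ (so that $\rho\le1$), or strengthen~(16) by a summability condition on $(\log\rho_{\tilde s})_{+}$ along the configuration; with the Haar measure actually used in the construction this difficulty does not arise and the bound holds with the explicit constant $c(\tilde s)=e^{C(\tilde s)}$.
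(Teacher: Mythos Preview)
Your argument follows the same skeleton as the paper's: compute the Laplace functional of the translated measure and split it into two factors, one the renormalization constant (your $e^{C}$, the paper's $J_2$) and one carrying the change of intensity (your $\prod_{y\in\omega}\rho(y)$, implicit in the paper's $J_1$). The paper works entirely at the level of characteristic functionals and never writes down the density, whereas you package the comparison as a standalone lemma on pairs of quasi-Poisson measures and read off the Radon--Nikodym derivative explicitly; this is cleaner and more reusable, but not a different idea.

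Your final paragraph is worth noting: you correctly observe that for a merely almost-invariant $\mu$ the pointwise bound on $\rho=d\mu_2/d\mu_1$ does not by itself control the infinite product $\prod_{y\in\omega}\rho(y)$, so the bound $d\sigma(\omega\tilde s)<c(\tilde s)\,d\sigma(\omega)$ is only immediate when $\rho\le1$ (in particular for right Haar measure, where $\rho\equiv1$). The paper's proof bounds $J_1$ by invoking almost-invariance without confronting this point, so your restriction to the Haar case is the more careful reading, and your identification of the extra hypothesis needed in the general case is a genuine addition.
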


 \begin{proof} It follows from the formula for the characteristic functional of the measure~$\sigma $
that
\begin{multline*}
\int\limits_{\mathcal E(S \times X)} \exp\bigg( - \sum\limits_{(s,x)\in \omega }f(s,x)\bigg)
\,d \sigma (\omega\widetilde  s )
\\
=
\exp\bigg(\int\limits_{S \times X } (e^{ -  f(ss_0^{-1})}-e^{-u(s)})\,d\mu(x)
dm(x) \bigg).
\end{multline*}
The right-hand side of this equality can be written as the product
$$
\exp\bigg(\int\limits_{S \times X } (e^{ -  f(ss_0^{-1})}-e^{-u(s)})\,d\mu(x)
dm(x) \bigg)=J_1 J_2,
$$
where
$$
J_1=\exp\bigg(\int\limits_{S \times X } (e^{ -  f(s)}
-e^{-u(s)})\,d\mu(ss_0x) dm(x)\bigg),
$$
$$
J_2=\exp\bigg(\int\limits_{S \times X } (e^{ -  u(ss_0^{-1})}-e^{-u(s)})\,d\mu(x)
dm(x) \bigg).
$$
It follows from condition~(ii) and the formula for the characteristic functional of~$\sigma $
that
$$
J+1 \leq c(\widetilde  s) \int\limits_{{\mathcal E}(S \times X)}
\exp\bigg( - \sum\limits_{(s,x)\in \omega }f(s,x)\bigg)\,d \sigma (\omega ).
$$
Condition~(i) implies that the integral~$J_2$ converges. Thus we obtain the bound
\begin{multline*}
\int\limits_{\mathcal E(S \times X)} \exp\bigg( - \sum\limits_{(s,x)\in \omega }f(s,x)\bigg)
d \sigma (\omega\widetilde  s )
\\
\leq c \int\limits_{\mathcal E(S \times X)}
\exp\bigg( - \sum\limits_{(s,x)\in \omega }f(s,x)\bigg)\,d \sigma (\omega ),
\end{multline*}
where $c$ is a constant.

It follows that  $d \sigma (\omega\widetilde  s ) < c \,d \sigma (\omega )$.

This property of the measure~$\sigma $ will be called the  {\it almost projective equivalence with respect to the transformation
group~$S^X$}. Note that if $\nu$ is an invariant measure, then we have
$d \sigma (\omega\widetilde  s ) =c(\widetilde  s) c \,d \sigma (\omega ),$  where
$c(\widetilde  s)$ is a character on~$S^X.$
\end{proof}

\begin{definition} We define a representation of the group~$S^X$
in the quasi-Poisson space $\operatorname{QPS}(\mathcal E(Y),\sigma,K)$ by the formula
$$
(U(\widetilde  s)f)(\omega )=f(\omega \widetilde  s), \quad  \widetilde  s\in S^X.
$$
\end{definition}

The above theorem implies that the operators~$U(\widetilde  s)$ for $\widetilde  s\in S^X$ are bounded.

It is not difficult to see that these operators, together with the operators corresponding to the elements of the group~$N^X$, generate a representation of the whole group $P^X=S^X\rightthreetimes N^X$ by bounded operators in the quasi-Poisson space
 $\operatorname{QPS}(\mathcal E(Y),\sigma,K)$.

 \subsection{Formulas for the representation operators  of the group~$U(p,q)^X$}
 Let us give a unified formula for the operators of the representation of~$U(p,q)^X$
on some total subset in $\operatorname{QPS}(\mathcal E(Y),\sigma,K)$.

Denote by  $K^Y$, where $Y=S \times X$,  the set of maps
$v:\,S \times X\to K$ such that
\begin{enumerate}
\item[\rm(i)]  $f_v(\omega )\equiv \bigoplus_{(s,x)\in \omega } v(s,x) \in K^\oplus _\omega  $
for almost all configurations $\omega \in \mathcal E(Y)$;
\item[\rm(ii)]  $\int\limits_{\mathcal E{(Y)}} \|f+v(\omega )\|^2 \,d \sigma (\omega  )  < \infty $.
\end{enumerate}

Thus we have a map $K^Y \to \operatorname{QPS}(\mathcal E(Y),\sigma,K)$
of the form
$$
v \mapsto  f_v(\omega )=\bigoplus_{(s,x)\in \omega } v(s,x).
$$
The set of functions $f+v(\omega )$ thus defined is total in the space $\operatorname{QPS}(\mathcal E(Y),\sigma,K)$; hence, in order to describe the representation~$U$ of the group~$U(p,q)^X$, it suffices to describe its action only on the functions from this set.

The original representation~$\widetilde  T$ of the group~$U(p,q)$ in the space~$\mathcal K$ induces a pointwise representation of the group of currents~$U(p,q)^X$ in the space~$\widetilde  K^Y$ of all sections~$v(s,x)$ of the fiber bundle over $S \times X$
with fiber~$K$, whose operators will be denoted by the same symbol~$\widetilde  T,$ i.e.,
$$
(\widetilde  T(g(\,\cdot\, ))v)(s,x)=\widetilde  T(g+x) y(s,x),
$$
where the operator in the right-hand side is the operator of the representation of the group~$U(p,q)^X$
acting on~$v$ as a function of~$s$.

\begin{theorem} The operators $\widetilde  T$ of the group~$U(p,q)^X$
preserve the set~$K^Y$; the action of
the operators~$U(\widetilde  g)$ of the representation of~$U(p,q)^X$ in the quasi-Poisson space $\operatorname{QPS}(\mathcal E(Y),\sigma,K)$ on the subset of functions~$f_v(\omega)$ is given by the following formula:
\begin{equation}
(U(\widetilde  g)f_v)(\omega )=f_{\widetilde  T(\widetilde  g)} (\omega).
\end{equation}
\end{theorem}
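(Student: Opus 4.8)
The plan is to reduce the statement to a generating set of $U(p,q)^X$ and then propagate it multiplicatively. Observe that if $\widetilde T(\widetilde g_i)$ preserves $K^Y$ and $U(\widetilde g_i)f_v=f_{\widetilde T(\widetilde g_i)v}$ for every $v\in K^Y$ and $i=1,2$, then applying these facts successively to $\widetilde g=\widetilde g_1\widetilde g_2$ yields $U(\widetilde g)f_v=U(\widetilde g_1)f_{\widetilde T(\widetilde g_2)v}=f_{\widetilde T(\widetilde g_1)\widetilde T(\widetilde g_2)v}=f_{\widetilde T(\widetilde g)v}$, the middle step being legitimate precisely because $\widetilde T(\widetilde g_2)v$ again lies in~$K^Y$. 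Since the Iwasawa decomposition $U(p,q)=PK$ is measurable, $U(p,q)^X=P^X\cdot K^X$ is generated by $P^X=S^X\rightthreetimes N^X$ and the $K$-valued currents; hence it suffices to prove both assertions for $\widetilde g\in N^X$, for $\widetilde g\in S^X$, and for $K$-valued currents, the only elements of~$K$ requiring genuinely new work being the involution~$w$ and certain other compact elements, which are handled in the same manner.

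For $\widetilde g=n(\,\cdot\,)\in N^X$ the verification is immediate from the construction: $\widetilde U(n(\,\cdot\,))$ acts on each fibre $K_\omega^\otimes$ as a multiplicator, sending a decomposable vector $\bigotimes_{(s,x)\in\omega}h_{s,x}$ to $\bigotimes_{(s,x)\in\omega}T(n(x))h_{s,x}$, whereas the pointwise representation transforms a section by $(\widetilde T(n(\,\cdot\,))v)(s,x)=T(n(x))v(s,x)$; comparing the two displays gives $\widetilde U(n(\,\cdot\,))f_v=f_{\widetilde T(n(\,\cdot\,))v}$ on the total set of decomposable $f_v$. As each Bargmann operator $T(n)$ is unitary on~$K$, condition~(i) in the definition of~$K^Y$ is preserved, and condition~(ii) survives because $\widetilde U(n(\,\cdot\,))$ is a bounded (indeed unitary) operator on the quasi-Poisson space.

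For $\widetilde g=\widetilde s\in S^X$ the operator $\widetilde U(\widetilde s)$ acts by the configuration translation $\omega\mapsto\omega\widetilde s$, which by the preceding theorem is bounded owing to the almost projective equivalence $d\sigma(\omega\widetilde s)<c(\widetilde s)\,d\sigma(\omega)$. Since this translation moves each point of a configuration by translation in its $s$-variable, exactly as $\widetilde T(\widetilde s)$ acts on a section $v(s,x)$, one again reads off $U(\widetilde s)f_v=f_{\widetilde T(\widetilde s)v}$ by comparing the two descriptions on decomposable vectors; the Radon--Nikodym domination of the preceding theorem then shows that condition~(ii), and hence $K^Y$, is preserved. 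The shift of the base point induced by these translations is exactly what supplies the cocycle term, so no separate bookkeeping for it is needed.

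It remains to treat the $K$-valued currents, where everything comes down to the involution~$w$. Here $\widetilde T(w)$ is the operator permuting the coherent vectors of the cocycle subspace~$L$ by $b(n)\mapsto b(p^{*}np)$ (the Hermitian-matrix picture of~$L$), lifted pointwise to $U(p,q)^X$ and transported to the fibres $K_\omega^\otimes$, whence $U(w(\,\cdot\,))f_v=f_{\widetilde T(w(\,\cdot\,))v}$ follows by the same comparison of descriptions. The main obstacle is that, unlike the operators attached to $N^X$ and $S^X$, the operator for the involution is in general unbounded; thus ``$\widetilde T(w(\,\cdot\,))$ preserves~$K^Y$'' must be read on a suitable dense common domain, and the substantive point is to verify that for sections $v$ in this domain the transformed section $\widetilde T(w(\,\cdot\,))v$ still satisfies the integrability condition~(ii). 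This amounts to an estimate on the quasi-Poisson measure analogous to, but more delicate than, the bound proved for $S^X$; securing it is where the real work lies, and once it is in hand the multiplicative argument of the first paragraph extends both assertions to all of~$U(p,q)^X$.
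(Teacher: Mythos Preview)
The paper states this theorem without proof: after the statement there is only the one-line definition of the affine operators $A(g)v=T(g)v+b(g)$, and the text then moves on to the next subsection. So there is no ``paper's own proof'' to compare against; your proposal is in fact considerably more detailed than anything the authors supply here.

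Your strategy of reducing to the generating pieces $N^X$, $S^X$, and $K^X$ via the measurable Iwasawa decomposition, together with the multiplicative propagation in your first paragraph, is the natural one and matches how the paper organizes the surrounding material (the $N^X$ operators as fibrewise unitary multiplicators, the $S^X$ operators as configuration translations controlled by the almost-projective-equivalence bound of the preceding theorem). Those two cases are handled correctly.

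Where your proposal is incomplete---and you say so yourself---is the $K^X$ case, in particular the involution $w$. You correctly identify that the operator $T(w)$ is in general unbounded, so that ``$\widetilde T(w(\cdot))$ preserves $K^Y$'' can only hold on a dense domain, and that the substantive point is an integrability estimate analogous to but harder than the $S^X$ bound. You do not carry this estimate out. Note, however, that the paper does not either: the extension to $K^X$ is treated only in Section~8, \emph{after} this theorem, and there the involution is handled purely algebraically (as a permutation of the cocycle vectors $b(\widetilde p)$) on the pre-Hilbert span $L$, with no norm estimate offered. So the gap you flag is a genuine gap in the paper's programme as well, not just in your write-up; the authors explicitly concede in the Conclusion that the involution is ``defined, in general, only on a dense subspace.'' Your proposal is an honest outline that goes as far as the paper itself does on this point.
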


Denote by~$A$  the operators of the affine representation of~$U(p,q)^X$
in the space~$F_\nu^X$ associated with the linear representation~$T$ and its 1-cocycle~$b$:
$$
A(g)v= T(g)v +b(g).
$$





\subsection{Irreducibility conditions for the representation~$U$ of the group~$U(p,q)^X$}

\begin{theorem} If a representation~$T$ of the subgroup~$N$ is irreducible and the representations~$T_s$ conjugate with it are pairwise nonequivalent, then the representation~$\widetilde U$ of the group
$P^X= S^X\rightthreetimes N^X$ in the quasi-Poisson space
$\operatorname{QPS}(\mathcal E(Y),\sigma,K)$ is irreducible.
\end{theorem}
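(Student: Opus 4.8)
The plan is to repeat, now over the configuration space $\mathcal E(Y)$ with $Y=S\times X$, the two-step argument that established the operator-irreducibility of the Iwasawa representations $T_\epsilon$. Let $A$ be a bounded operator on $\operatorname{QPS}(\mathcal E(Y),\sigma,K)$ commuting with all $\widetilde U(g)$, $g\in P^X=S^X\rightthreetimes N^X$. One exploits the two factors separately: the elements of $N^X$ act fibrewise over $\mathcal E(Y)$ and do not move configurations, whereas the elements of $S^X$ act by the translations $f(\omega)\mapsto f(\omega\widetilde s)$ of the preceding subsection, which are bounded thanks to the almost projective equivalence $d\sigma(\omega\widetilde s)<c(\widetilde s)\,d\sigma(\omega)$ proved above.

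First step. For $n(\,\cdot\,)\in N^X$ the operator $\widetilde U(n(\,\cdot\,))$ is decomposable with respect to the bundle $\omega\mapsto K_\omega^{\otimes}=\bigotimes_{(s,x)\in\omega}K_s$, acting on the fibre over $\omega$ as the product multiplier $\bigotimes_{(s,x)\in\omega}T_s(n(x))$. Fix a configuration $\omega$: since its points have pairwise distinct $X$-coordinates, $n(\,\cdot\,)$ may be taken equal to any prescribed element of $N$ near one of them and to the identity near the others, which exhibits on $K_\omega^{\otimes}$ every operator $T_s(n)\otimes 1\otimes\cdots$ with $n\in N$. By the hypothesised irreducibility of $T$, hence of each conjugate $T_s$, the weak closure of $\{T_s(n):n\in N\}$ is all of $B(K_s)$; taking products over finitely many points of $\omega$ and closing, one fills the whole fibre algebra $B(K_\omega^{\otimes})$ — this is the standard description of the von Neumann complete infinite tensor product $\bigotimes(K_s,\mathbf 1)$ formed with respect to the vacuum vectors. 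Hence $A$ belongs to the commutant of $\int_{\mathcal E(Y)}^{\bigoplus}B(K_\omega^{\otimes})\,d\sigma(\omega)$, that is, to the algebra of diagonalisable operators, so $A$ is multiplication by a function $a\in L^\infty(\mathcal E(Y),\sigma)$. Here the hypothesis that the $T_s$ are pairwise non-equivalent makes the base-point labels of a configuration rigid — no intertwiner mixes fibres $K_s$ and $K_{s'}$ — so that one recovers precisely the diagonal algebra $L^\infty(\mathcal E(Y),\sigma)$ and nothing larger.

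Second step. For $\widetilde s=s_0(\,\cdot\,)\in S^X$, writing $A$ as multiplication by $a$, the relation $\widetilde U(\widetilde s)A=A\widetilde U(\widetilde s)$ reads $a(\omega\widetilde s)=a(\omega)$ for $\sigma$-a.e.\ $\omega$ and every $\widetilde s$; thus $a$ is invariant, modulo $\sigma$-null sets, under the whole transformation group $S^X$ of $\mathcal E(Y)$. The right-translation action of $S$ on $(S,\mu)$ is ergodic — indeed transitive — and the induced action of the current group $S^X$ on $(\mathcal E(S\times X),\sigma)$ is ergodic as well; this is the current-group counterpart of the ergodicity invoked in the proof for $T_\epsilon$, and rests on that transitivity together with the fact that $S^X$ consists of all measurable maps $X\to S$, so that it acts on each infinitesimal $X$-fibre independently. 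Granting this ergodicity, $a$ is constant $\sigma$-a.e., hence $A$ is a scalar and $\widetilde U$ is operator-irreducible.

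The main obstacle is the first step — making rigorous the passage from ``$A$ commutes with the fibrewise multipliers $\widetilde U(N^X)$'' to ``$A$ is diagonalisable''. This requires handling the infinite tensor products $K_\omega^{\otimes}$, whose structure varies with $\omega$, verifying the measurability of the field $\omega\mapsto A_\omega$, and using both hypotheses in concert: irreducibility of $T$ to fill each fibre with $B(K_s)$, and pairwise non-equivalence of the $T_s$ to exclude off-diagonal intertwiners between fibres. The ergodicity of $\sigma$ under $S^X$ used in the second step should be isolated and proved as a separate lemma — by the standard argument for Poisson-type suspensions over a base action with no invariant set of finite measure — after which the second step is routine bookkeeping.
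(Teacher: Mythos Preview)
Your proposal is correct and follows essentially the same two-step route as the paper: first use the fibrewise action of $N^X$ to force any intertwiner $A$ to be a multiplication operator on $\mathcal E(Y)$, then use the ergodicity of $\sigma$ under $S^X$ to conclude that the multiplier is constant. The paper's proof is a terse version of yours --- it simply asserts that the fibre representations $U_\omega$ are irreducible (from the irreducibility of $T$) and pairwise nonequivalent (from the pairwise nonequivalence of the $T_s$), invokes the standard direct-integral commutant description, and then cites ergodicity; your write-up spells out the operator-algebraic mechanism and flags the technical points (measurability of the field, the ergodicity lemma) that the paper passes over in silence. One small clarification: the role of the pairwise nonequivalence of the $T_s$ is to distinguish the representations $U_\omega$ and $U_{\omega'}$ for \emph{distinct configurations} $\omega\neq\omega'$ --- not, as your phrasing suggests, to separate tensor factors $K_s$ and $K_{s'}$ inside a single $K_\omega^{\otimes}$; it is this inter-fibre nonequivalence that rules out off-diagonal pieces in the commutant and yields $A\in L^\infty(\mathcal E(Y),\sigma)$.
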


\begin{proof}
Since the representation~$T$  of the subgroup~$N$ is irreducible, it follows that for almost every configuration $\omega \in\mathcal E (Y) $, the representation~$U_\omega $ of the group~$N^X$ in the space~$K^\oplus_\omega $ is irreducible.

Since the representations~$T_s$ are pairwise nonequivalent, it follows that the representations~$U_\omega $ of the group~$N^X$ in the spaces~$K^\oplus_\omega $ are pairwise nonequivalent. Hence every bounded operator~$A$ on
$\operatorname{QPS}(\mathcal E(Y),\sigma,K)$ that commutes with the operators of the subgroup~$N^X$ is the operator of multiplication by a function~$f(\omega ).$  If~$A$ commutes also with the operators of the subgroup~$S^X,$ then the function~$f(\omega )$ is constant on the orbits of this subgroup in~$\mathcal E(Y).$ Together with the ergodicity of the quasi-Poisson measure~$\sigma $ with respect to the transformations from the group $S^X$, this implies that
$ f(\omega )=\mathrm{const}.$
\end{proof}

\section{Construction of an extension of a nonunitary representation of the group~$P^X$ to a representation of the group~$U(p,q)^X$}

\subsection{Setting of the problem}

We proceed to the final stage of our construction, that of constructing an extension of a representation from the group~$P^X$ to the whole group of currents~$U(p,q)^X$.

To this end, we replace the original quasi-Poisson space $\operatorname{QPS} (\mathcal E(Y),\sigma,K^\otimes)$ with an invariant pre-Hilbert subspace  linearly spanned by a fixed vector and its group translations. This approach is reasonable, since, by the irreducibility of the representation of the group~$P^X$,
every such subspace is dense in the space of the original representation. We will describe an extension of the representation of the group~$P^X$ in this subspace to the whole group~$U(p,q)^X$.

Fix the following vector:
\begin{equation}
Q(\omega)=\otimes_{(s,x)\in\omega} f(s,x), \quad\text{where}\quad f(s,x)=e^{-\frac12}|s|^2  1_S \in K_S.
\end{equation}

The formula for the characteristic functional of the quasi-Poisson measure~$\sigma$ implies that this vector lies in the space $\operatorname{QPS}(\mathcal E(Y),\sigma,K^\otimes)$ and has norm~$1$.

Denote by $b(\widetilde  g)=b(g(\,\cdot\,))$ the trivial 1-cocycle in the space~$L$ generated by the vector~$Q$, i.e.,
$$b\widetilde (g) = U(\widetilde  g) Q - Q.$$

We will denote by $L=L(\mathcal E(Y),\sigma,K^\otimes)$ the pre-Hilbert space linearly spanned by the vectors~$b(g(\cdot))$.

The construction of the desired extension literally reproduces the construction of the analogous extension of a representation of the Iwasawa group to  the group~$U(p,q)$. Namely, let $K$ be the maximal compact subgroup in~$U(p,q)$. Consider the decomposition
$$
U(p,q)^X=P^X K^X.
$$

As in the case of the group $U(p,q)$, we define the action an element of the group~$K^X$ on the set of vectors $b(\widetilde p)$, $\widetilde p \in P^X$, as a permutation:
$$
U(\widetilde k)b(\widetilde p)=b(\widetilde p'), \quad \widetilde p'\in P^X,
$$
where $\widetilde p'$ is determined by the relation $\widetilde k \widetilde p=\widetilde p'\widetilde k'$, $\widetilde k'\in K^X$.
The operators thus defined for the elements of~$K^X$ satisfy the group property, and they can be extended by linearity to operators on the whole space~$L$.

It is not difficult to check that together with the representation operators of the original group~$P^X$ they generate a representation of the whole group~$U(p,q)^X$.

Note that in the resulting representation, the operators corresponding to elements of the center of~$U(p,q)$ are identity operators.

\subsection{The involution operator~$w$}

The group of currents~$U(p,q)^X$ is algebraically generated by the set of elements of the group~$P^X$ and a single element~$w$ of the compact group~$K$; hence, to extend the representation of~$P^X$ in the space~$L$ to~$U(p,q)^X$, it suffices to describe the action of the single operator~$U(w)$. It follows from the definition that this action is given by the formula analogous to the formula for the action of the involution in the representation~$T$ of the group~$U(p,q)$.

\section*{Conclusion}

Our construction is completed: we have obtained a well-defined (nonunitary) operator-irreducible representation of the group of measurable currents with values in the group~$U(p,q)$ for arbitrary positive integers
$p,q$ with $q\geq p$ in a quasi-Poisson pre-Hilbert space, where the operators corresponding to the subgroup of currents of the Iwasawa group act by bounded (everywhere defined) operators, and the involution is defined, in general, only on a dense subspace.

The further investigation of the properties of this and similar representations is a matter for the future. One can expect that this investigation will lead to a considerable extension of tools and possibilities of representation theory.

\end{document}